\theoremstyle{plain}
\newtheorem{theorem}{Theorem}[section]
\newtheorem{lemma}{Lemma}[section]
\newtheorem{proposition}{Proposition}[section]
\newtheorem{defin}{Definition}[section]
\newtheorem{corollary}{Corollary}[section]
\numberwithin{equation}{section}
\newcommand{\p}{\partial}
\newcommand{\m}{\mathbb}
\newcommand{\D}{\Delta}
\newcommand{\rr}{\mathbb{R}}
\newcommand{\nn}{\mathbb{N}}
\newcommand{\vp}{\varphi}
\newcommand{\ms}{\mathcal{S}}
\newcommand{\mf}{\mathscr{F}}
\begin{document}

\title[The FORQ equation, J. Holmes: holmesj@wfu.edu]{Continuity of the data-to-solution map for the FORQ equation in Besov Spaces}
\author [The FORQ equation, J. Holmes: holmesj@wfu.edu] 
{John Holmes$^1$,  Feride T\i\u glay$^2$, Ryan Thompson $^3$
\\
$^1${\em \tiny {Department of Mathematics and Statistics, Wake Forest University, Winston Salem, NC 27109, USA}}
\\
$^2${\em \tiny {Department of Mathematics, The Ohio State University, Columbus, OH 43210, USA}}
\\
$^3${\em \tiny {Department of Mathematics, The University of North Georgia, Dahlonega, GA 30597, USA}}
}
 \date{%
    \today
}

\begin{abstract} 
For Besov spaces $B^s_{p,r}(\rr)$ with $s>\max\{ 2 + \frac1p , \frac52\} $,  $p \in (1,\infty]$ and $r \in [1 , \infty)$, it is proved that the data-to-solution map for the FORQ equation is not uniformly continuous from $B^s_{p,r}(\rr)$ to $C([0,T]; B^s_{p,r}(\rr))$. The proof of non-uniform dependence is based on approximate solutions and the Littlewood-Paley decomposition.  
\end{abstract}

\subjclass[2010]{35Q35, 35Q80} 

\maketitle

%
%
%
%
\section{Introduction} 
We consider the following initial value problem for the integrable partial differential equation, known as the FORQ equation: 
\begin{align}\label{forq}
&u_t  + u^2 u_x - u_x^3+  (1-\p_x^2)^{-1} \partial_x ( \frac23 u ^3  +uu_x^2 ) +    (1-\p_x^2)^{-1} (\frac13 u_x^3 )= 0\\
& u(x,0) = u_0(x),\label{forq-data}
\end{align}
where
 $x\in \mathbb R$ and $u_0 \in B^s_{p,r} (\mathbb R)=  B^s_{p,r} $ where $(1-\p_x^2)^{-1} f = \mathcal F^{-1} ( \frac{1}{1+\xi^2} \widehat f (\xi))$. 
The FORQ equation was derived in  Fokas \cite{F}, Fuchssteiner \cite{Fu}, Olver and Rosenau \cite{OR} by applying the method of tri-Hamiltonian duality to the bi-Hamiltonian representation of the modified Korteweg-de Vries equation. It was also derived by Qiao \cite{Q} who showed an entire integrable hierarchy known as the MCH hierarchy and the existence of a Lax pair.  Qiao also showed that FORQ admits solitary solutions including {\em peakons}, solitary traveling wave  solutions of the form
$$
u(x,t) = \sqrt{\frac{c}{2}} e^{-|x-ct| } .
$$

    Peakons  were discovered  by Fornberg and Whitham \cite{FW} and then by Camassa and Holm \cite{CH} as solutions to shallow water wave equations, and take the general form
    $$
    u(x,t) = ce^{-|x-q(t)|}.
    $$
     The Camassa-Holm (CH) equation 
    \begin{align}
    (1-\partial_x)^2 u_t = uu_{xxx} +2u_x u_{xx} - 3uu_x,
    \end{align} 
    is an integrable equation closely related to the FORQ equation, which also admits peakons; for more information about peakon solutions, we refer the reader to Holm
and Ivanov \cite{HI}.  Originally derived by Fokas and Fuchssteiner \cite{FF} in the context of hereditary symmetries, it was later derived by \cite{CH} from the Euler equations through asymptotic expansions. 
 One may note that the CH equation has quadratic rather than cubic nonlinearities, and this plays an important role in the analysis of these two equations. Local well-posedness results for the CH equation were discovered by Li and Olver \cite{LO}, Constantine and Escher \cite{CE}, Rodriguez-Blanco \cite{RB} and Danchin \cite{D2001} among others.

Well-posedness in the sense of Hadamard for the Cauchy problem \eqref{forq}-\eqref{forq-data} was  shown by Himonas and Matzavinos  \cite{HMA} in Sobolev spaces $H^s$, $s>5/2$ in both the periodic and non periodic cases. 
Well-posedness   in Besov spaces $B^s_{p,r}$ with $p \in [1,\infty]$, $r \in [1 , \infty)$ and $s > \max\{ 2 + \frac1p , \frac52\} $ and at the critical index corresponding to $s = 5/2$, $p = 2$, $r=1$, by Fu, Gui, Liu and Qu \cite{FGLQ} (additionally the case $r=\infty$ is considered, however, the continuity of the data-to-solution map is established in a weaker topology). These authors also derive a precise blow-up scenario and a lower bound on the maximal time of existence for solutions. The methodology used in the well-posedness argument in Besov spaces is based upon the ideas in \cite{D2001}.

Ill-posedness for the Cauchy problem \eqref{forq} in $H^s$ for any $s<3/2$ was demonstrated by studying the interaction of multi-peakon solutions in  Himonas and Holliman \cite{AH}. This was extended to a related family of equations in \cite{holmes2020}. Well-posedness in the gap, $\frac32<s<\frac52$, remains an open question for the FORQ initial value problem, and appears to be  challenging  due to  the $u_x^3$ term.

Continuity of the data-to-solution map is an important part of the well-posedness theory, and is very delicate for the CH equation and related problems. In fact, Himonas and Kenig \cite{HK}
and Himonas, Kenig and Misiolek \cite{HKM} prove that the data-to-solution map for the CH equation in Sobolev spaces is not uniformly continuous in the non-periodic and periodic cases respectively. They use the method of approximate solutions, conservation of the $H^1$ norm, and commutator estimates. More recently, a similar result has been shown in Besov spaces by Li, Yu and Zhu \cite{LYZ}. This result does not extend cleanly to the FORQ equation. Rather, using some ideas from \cite{LYZ} as well as some ideas from  \cite{HMA}  we are able to deal with the cubic nonlinearities in the FORQ equation  and prove nonuniform dependence of the data-to-solution map in Besov spaces. In particular, our main
 result can be stated as follows. 
\begin{theorem}\label{thm1} 
Assume $s>\max\{ 2 + \frac1p , \frac52\} $, with $p \in (1,\infty]$ and $r \in [1 , \infty)$
. Then the data-to-solution map for the initial value problem \eqref{forq}-\eqref{forq-data} is not uniformly continuous  from $B^s_{p,r}(\rr)$ to $C([0,T]; B^s_{p,r}(\rr))$. 
\end{theorem}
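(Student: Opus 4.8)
The plan is to use the method of approximate solutions, in the spirit of \cite{HK,LYZ}, adapted to the cubic nonlinearity of \eqref{forq}. First rewrite the local nonlinear part as $u^2u_x-u_x^3=(u^2-u_x^2)u_x$, so that \eqref{forq} takes the transport form $u_t+(u^2-u_x^2)u_x+\mathcal N(u)=0$, where $\mathcal N(u):=(1-\p_x^2)^{-1}\p_x(\tfrac23u^3+uu_x^2)+(1-\p_x^2)^{-1}(\tfrac13u_x^3)$ is a smoothing perturbation. Linearising about a constant state $c$ produces transport at speed $c^2$, which is even in $c$, so the sign trick available for quadratic equations does not apply; instead, fix distinct $\om_1,\om_2>0$ and, for large $n$, put
\[
u^{n,\om}(x,t)=\om\,n^{-1/2}\phi_0(x)+n^{-s}\phi(x)\cos(nx-\om^2t),\qquad \om\in\{\om_1,\om_2\},
\]
with $\phi,\phi_0\in C_c^\infty(\rr)$ and $\phi_0\equiv1$ on $\operatorname{supp}\phi$ (replacing $n$, if convenient for the Littlewood--Paley bookkeeping, by a value for which $n^{-s}\phi(x)\cos(nx-\cdot)$ is spectrally localised in a single dyadic annulus). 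The scaling $n^{-1/2}$ of the low-frequency part is the unique one for which the induced transport speed $\om^2n^{-1}$ produces an $O(1)$ phase shift $\om^2t$ over bounded times while $\om n^{-1/2}\phi_0\to0$ in $B^s_{p,r}$.

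Writing $a=\om n^{-1/2}\phi_0$ and $v=n^{-s}\phi\cos(nx-\om^2t)$, the first task is to show that the error
\[
E^{n,\om}:=u^{n,\om}_t+\bigl((u^{n,\om})^2-(u^{n,\om}_x)^2\bigr)u^{n,\om}_x+\mathcal N(u^{n,\om})
\]
satisfies $\|E^{n,\om}(t)\|_{B^s_{p,r}}\lesssim n^{-1}$ uniformly on $[0,T]$. The only term of size $O(1)$ in $B^s_{p,r}$, the frequency-$n$ part of $v_t+a^2v_x$, cancels exactly because the phase speed was chosen to equal $a^2=\om^2n^{-1}$; the leftover $\om^2n^{-1-s}\phi'(x)\cos(nx-\om^2t)$ is $O(n^{-1})$, and the self-interactions $2avv_x,\ v^2v_x,\ -v_x^3$ together with the three smoothing terms are each $O(n^{-1})$ or smaller once $s>2$, owing to the extra powers of $n^{-s}$ and the regularisation from $(1-\p_x^2)^{-1}$. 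I would simultaneously record $\|u^{n,\om}(t)\|_{B^s_{p,r}}\lesssim1$, the crude bound $\|u^{n,\om}(t)\|_{B^{s+1}_{p,r}}\lesssim n$, and — using $\cos(nx-\om_1^2t)-\cos(nx-\om_2^2t)=-2\sin(nx-\tfrac{(\om_1^2+\om_2^2)t}{2})\sin(\tfrac{(\om_2^2-\om_1^2)t}{2})$ and the fact that $\|n^{-s}\phi(x)\sin(nx-c)\|_{B^s_{p,r}}$ is bounded below uniformly in $c$ while $\|a\|_{B^s_{p,r}}=O(n^{-1/2})$ — the separation $\|u^{n,\om_1}(t)-u^{n,\om_2}(t)\|_{B^s_{p,r}}\gtrsim\bigl|\sin(\tfrac{(\om_2^2-\om_1^2)t}{2})\bigr|$.

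Let $U^{n,\om}$ be the genuine solution of \eqref{forq}--\eqref{forq-data} with data $u^{n,\om}(\cdot,0)$, and set $W^{\om}=U^{n,\om}-u^{n,\om}$, so $W^\om(0)=0$. Since the data are uniformly bounded in $B^s_{p,r}$, the well-posedness theorem of \cite{FGLQ} gives a common existence time $T>0$ with $\|U^{n,\om}(t)\|_{B^s_{p,r}}\lesssim1$ on $[0,T]$; as $\|\p_x(U^2-U_x^2)\|_{L^\infty}\lesssim\|U^{n,\om}\|_{B^s_{p,r}}^2\lesssim1$ (using $B^s_{p,r}\hookrightarrow C^2$, valid because $s>2+\tfrac1p$), persistence of regularity on $[0,T]$ yields $\|U^{n,\om}(t)\|_{B^{s+1}_{p,r}}\lesssim\|u^{n,\om}(0)\|_{B^{s+1}_{p,r}}\lesssim n$. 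Expanding the difference of the cubic terms and moving the $W_x$-part onto the left, $W^\om$ solves
\begin{align*}
W^\om_t+\bigl[(U^2-U_x^2)+(U_x+u^{n,\om}_x)u^{n,\om}_x\bigr]W^\om_x
&=-(U+u^{n,\om})u^{n,\om}_x\,W^\om-\bigl(\mathcal N(U)-\mathcal N(u^{n,\om})\bigr)\\
&\quad+E^{n,\om},
\end{align*}
where the bracketed transport coefficient still has bounded $L^\infty$-derivative because $\|u^{n,\om}_{xx}\|_{L^\infty}\lesssim n^{2-s}\to0$ and $\|U_{xx}\|_{L^\infty}\lesssim\|U^{n,\om}\|_{B^s_{p,r}}\lesssim1$. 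Running the $B^{s-1}_{p,r}$ transport estimate together with the sharp commutator estimate (so that only $L^\infty$-norms of \emph{first} derivatives of the coefficients, all $O(1)$, enter, rather than full higher derivatives) and the product and smoothing estimates for $\mathcal N(U)-\mathcal N(u^{n,\om})$ gives $\tfrac{d}{dt}\|W^\om(t)\|_{B^{s-1}_{p,r}}\lesssim\|W^\om(t)\|_{B^{s-1}_{p,r}}+\|E^{n,\om}(t)\|_{B^{s-1}_{p,r}}$. Gr\"onwall then yields $\|W^\om(t)\|_{B^{s-1}_{p,r}}\lesssim e^{CT}T\sup_{[0,T]}\|E^{n,\om}\|_{B^{s-1}_{p,r}}\lesssim n^{-2}$, and interpolation against the $B^{s+1}_{p,r}$ bound gives $\|W^\om(t)\|_{B^s_{p,r}}\lesssim\|W^\om(t)\|_{B^{s-1}_{p,r}}^{1/2}\|W^\om(t)\|_{B^{s+1}_{p,r}}^{1/2}\lesssim n^{-1/2}\to0$.

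Putting the pieces together, $\|U^{n,\om_1}(0)-U^{n,\om_2}(0)\|_{B^s_{p,r}}=|\om_1-\om_2|\,n^{-1/2}\|\phi_0\|_{B^s_{p,r}}\to0$, whereas for a fixed $t_0\in(0,T)$ with $\tfrac{(\om_2^2-\om_1^2)t_0}{2}\in(0,\pi)$ one has
\begin{align*}
\|U^{n,\om_1}(t_0)-U^{n,\om_2}(t_0)\|_{B^s_{p,r}}
&\ge \|u^{n,\om_1}(t_0)-u^{n,\om_2}(t_0)\|_{B^s_{p,r}}-\|W^{\om_1}(t_0)\|_{B^s_{p,r}}-\|W^{\om_2}(t_0)\|_{B^s_{p,r}}\\
&\gtrsim 1,
\end{align*}
which contradicts uniform continuity. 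The principal obstacle is exactly the $u_x^3$ term flagged in the introduction: a direct $B^s_{p,r}$ energy estimate for $W^\om$ loses a derivative through the factor $u^{n,\om}_x$, whose $B^s_{p,r}$ norm is only $O(n)$. The remedy is to keep $u^2-u_x^2$ together as a single transport coefficient, to route the residual $W_x$-contribution of the cubic term either into that coefficient (whose first $x$-derivative stays bounded) or into the smoothing operator $(1-\p_x^2)^{-1}$, to estimate $W^\om$ one derivative below the data regularity, and to recover $B^s_{p,r}$-smallness at the end by interpolating against the crude bound $\|U^{n,\om}\|_{B^{s+1}_{p,r}}\lesssim n$.
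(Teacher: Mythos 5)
Your proposal is sound in outline but takes a genuinely different route from the paper. You use the classical Himonas--Kenig construction adapted to the cubic nonlinearity: a two-parameter family $\omega n^{-1/2}\phi_0+n^{-s}\phi\cos(nx-\omega^2t)$ in which the low-frequency pedestal transports the high-frequency wave at speed $\omega^2n^{-1}$ and the separation is read off from the $O(1)$ phase shift $\omega^2 t$. The paper instead follows the Li--Yu--Zhu template: it perturbs a single high-frequency datum $u_{0,n}$ by a low-frequency bump $2^{-n/2}\phi(2^{-\delta n}x)$ to form $v_{0,n}$, takes as approximate solution the first-order Taylor expansion in time $w_n=v_{0,n}+t\,v_{0,n}^2\p_xv_{0,n}$, and obtains separation $\gtrsim ct$ from the lower bound $\|v_{0,n}^2\p_xv_{0,n}\|_{B^s_{p,r}}\approx\|2^{-ns-\delta n/p}\phi^3\sin(\frac{17}{12}2^nx)\|_{B^s_{p,r}}\approx 1$ (the last estimate of Lemma 3.1). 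The two mechanisms agree to first order in $t$ --- your $|\sin(\frac{(\omega_2^2-\omega_1^2)t}{2})|$ linearizes to the paper's $ct$ --- and both arguments rest on the same pillars: the transport form with coefficient $u^2-u_x^2$, the Besov transport estimate one derivative below $s$, and interpolation against a crude $B^{s+1}_{p,r}$ bound of size $n$. Your version buys separation at every fixed time where the sine is nonzero; the paper's version buys exactly computable Besov norms, because $\widehat\phi$ is chosen compactly supported so that every building block lives in a single dyadic block.

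Two soft spots need repair. First, the claimed bound $\sup_t\|E^{n,\omega}\|_{B^{s-1}_{p,r}}\lesssim n^{-2}$ is too strong: the self-interaction of the pedestal, $a^2a_x=\omega^3n^{-3/2}\phi_0^2\phi_0'$, together with the nonlocal terms generated by $a$ alone, lives at frequency $O(1)$, gains nothing from lowering the Besov index, and contributes $n^{-3/2}$ to $\|E^{n,\omega}\|_{B^{s-1}_{p,r}}$; only the frequency-$n$ residual $\omega^2n^{-1-s}\phi'(x)\cos(nx-\omega^2t)$ improves to $n^{-2}$. This is harmless --- interpolation still gives $\|W^\omega\|_{B^s_{p,r}}\lesssim (n^{-3/2})^{1/2}\,n^{1/2}=n^{-1/4}\to0$ --- but the exponent must be corrected. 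Second, and more structurally, you simultaneously want $\phi,\phi_0\in C_c^\infty$ in physical space with $\phi_0\equiv1$ on $\mathrm{supp}\,\phi$ (so that the cancellation in $v_t+a^2v_x$ is exact and $a_xv_x\equiv0$) and spectral localization of $\phi(x)\cos(nx-\cdot)$ in a single dyadic annulus (so that the $B^s_{p,r}$ norms, and in particular the uniform-in-phase lower bound $\|n^{-s}\phi\sin(nx-c)\|_{B^s_{p,r}}\gtrsim1$, can be computed for general $p,r$). These are incompatible as stated: compact support in $x$ forces non-compact Fourier support. You must either keep $\phi\in C_c^\infty$ and control the rapidly decaying Fourier tails in every dyadic block, or, as the paper does, take $\widehat\phi$ compactly supported, give up $\phi_0\equiv1$ on $\mathrm{supp}\,\phi$, and estimate all the cross terms that no longer vanish (this is precisely the content of the paper's step (3)). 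Either repair is routine, but neither is free, and one of them is required before the Littlewood--Paley bookkeeping is legitimate.
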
 
The case $p=1$ is not covered by our theorem. Similar to the difficulties in proving well-posedness, our estimates fail in this case due to the $u_x^3$ term in the FORQ equation. 

Our paper is organized as follows. In the next section, Section 2,  we present some preliminary results and introduce notation used in Besov space. Section 3 presents the proof of Theorem  \ref{thm1} and introduces some important estimates readers may find useful for  related problems. In particular we introduce two sequences of functions which we will use to construct our approximate solutions. We next provide some necessary estimates concerning these functions, and  then outline our proof before providing the details. 
%
%
%
%
\section{Preliminary result and notation}
In this section, we will recall some conclusions on the properties of Littlewood-Paley decomposition, and Besov spaces; these results may be found in \cite{danchin-book}. We begin with the Littlewood-Paley decomposition. 

\begin{lemma}
(Littlewood-Paley decomposition).  There exists a couple of smooth radial functions $(\chi,\vp)$ valued in $[0,1]$ such that $\chi$ is supported in the ball $B=\{\xi \in \m{R}^n,|\xi|\leq \frac43\}$ and $\vp$ is supported in the ring $C=\{\xi \in \m{R}^n,\frac34\leq|\xi|\leq \frac83\}$.  Moreover,
\[
\forall \xi \in \m{R}^n, \ \ \ \chi(\xi)+\sum_{q \in \m{N}}\vp(2^{-q}\xi)=1
\]
and
\[
supp \ \vp(2^{-q}\cdot) \ \cap \ supp \ \vp(2^{-q'}\cdot) = \emptyset, \ \ if \ \ |q-q'|\geq 2,
\]
\[
supp \ \chi(\cdot) \ \cap \ supp \ \vp(2^{-q}\cdot) = \emptyset, \ \ if \ \ |q|\geq 1.
\]
Then for $u \in \mathcal{S}'(\m{R})$ the nonhomogeneous dyadic blocks are defined as follows:
\begin{align*}
&\Delta_qu=0, \ \ if \ \ q \leq -2, \\
&\Delta_{-1}u=\chi(D)u = \mathscr{F}_x^{-1}\chi\mathscr{F}u, \\
&\Delta_qu=\vp(2^{-q}D)=\mathscr{F}_x^{-1}\vp(2^{-q}\xi)\mathscr{F}u, \ \ if \ \ q\geq 0.
\end{align*}
Thus $u=\sum_{q \in \m{Z}}\Delta_qu$ in $\mathcal{S}'(\m{R})$.
\end{lemma}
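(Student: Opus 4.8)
The plan is to construct $\chi$ from a single smooth radial bump and then define $\varphi$ as a dyadic difference of dilates of $\chi$, so that the partition-of-unity identity collapses to a telescoping sum. First I would fix the standard non-analytic building block $g(s)=e^{-1/s}$ for $s>0$ and $g(s)=0$ for $s\le 0$, which is smooth on $\mathbb R$, and from it form the smooth cutoff
\[
\psi(t)=\frac{g\!\left(\tfrac43-t\right)}{g\!\left(\tfrac43-t\right)+g\!\left(t-\tfrac34\right)},
\]
a smooth non-increasing function on $[0,\infty)$ valued in $[0,1]$ with $\psi(t)=1$ for $t\le\tfrac34$ and $\psi(t)=0$ for $t\ge\tfrac43$ (on the transition interval $\psi=1/(1+g(t-\tfrac34)/g(\tfrac43-t))$ is manifestly decreasing). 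Setting $\chi(\xi)=\psi(|\xi|)$ then produces a smooth radial function valued in $[0,1]$ and supported in the ball $B=\{|\xi|\le\tfrac43\}$, which is the first of the two required functions.

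Next I would define
\[
\varphi(\xi)=\chi\!\left(\tfrac{\xi}{2}\right)-\chi(\xi)=\psi\!\left(\tfrac{|\xi|}{2}\right)-\psi(|\xi|).
\]
Since $\psi$ is non-increasing and $|\xi|/2\le|\xi|$, the difference is nonnegative and bounded above by $\psi(|\xi|/2)\le1$, so $\varphi$ is smooth, radial, and valued in $[0,1]$. Tracking the plateaus of $\psi$ shows $\varphi(\xi)=0$ for $|\xi|\le\tfrac34$ (both terms equal $1$) and for $|\xi|\ge\tfrac83$ (both terms equal $0$), so $\varphi$ is supported in the ring $C=\{\tfrac34\le|\xi|\le\tfrac83\}$. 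The key identity then follows by telescoping: since $\varphi(2^{-q}\xi)=\chi(2^{-q-1}\xi)-\chi(2^{-q}\xi)$, the partial sums satisfy
\[
\chi(\xi)+\sum_{q=0}^{N}\varphi(2^{-q}\xi)=\chi(2^{-N-1}\xi),
\]
and letting $N\to\infty$, using continuity of $\chi$ at $0$ with $\chi(0)=1$, yields $\chi(\xi)+\sum_{q\in\mathbb N}\varphi(2^{-q}\xi)=1$ for every $\xi$ (here $\mathbb N$ includes $0$, consistent with the dyadic blocks defined for $q\ge0$).

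Finally I would verify the two disjointness statements by computing the dilated supports. The support of $\varphi(2^{-q}\cdot)$ is the annulus $\{2^q\tfrac34\le|\xi|\le2^q\tfrac83\}$; for $|q-q'|\ge2$ the inner radius of the higher annulus is at least $2^{q+2}\tfrac34=2^q\cdot3$, which exceeds the outer radius $2^q\tfrac83$ of the lower one, giving empty intersection. Likewise, for $|q|\ge1$ the inner radius $2^q\tfrac34\ge\tfrac32$ of $\operatorname{supp}\varphi(2^{-q}\cdot)$ exceeds the radius $\tfrac43$ of $\operatorname{supp}\chi$, so those supports are disjoint as well. The construction itself is elementary; the only real care needed is the bookkeeping, namely choosing the plateau levels of $\psi$ so that the difference $\varphi$ lands exactly in the prescribed ring and so that the dilation factor $2$ genuinely separates annuli two steps apart. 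Matching the numerical constants $\tfrac34,\tfrac43,\tfrac83$ against that factor is the main (and essentially only) obstacle.
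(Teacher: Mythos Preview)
Your construction is correct and is the standard one (build a smooth radial cutoff $\chi$ with the prescribed plateau and support, set $\varphi(\xi)=\chi(\xi/2)-\chi(\xi)$, telescope, and read off the support separations from the numerics $\tfrac34,\tfrac43,\tfrac83$). The paper, however, does not prove this lemma at all: it is listed among the preliminary results that are simply quoted from \cite{danchin-book}, so there is no argument in the paper to compare against. Your write-up supplies exactly the proof one finds in that reference, and the bookkeeping you flag (monotonicity of $\psi$ to get $0\le\varphi\le1$, and the inequality $2^2\cdot\tfrac34>\tfrac83$ for the annulus separation) is handled correctly.
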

\textbf{Remark.}  The low frequency cut-off $S_q$ is defined by
\[
S_qu=\sum_{p=-1}^{q-1}\Delta u =  \mf_x^{-1}\chi(2^{-q}\xi)\mf_xu, \ \ \forall q \in \nn.
\]
We can see that
\[
\D_p\D_qu\equiv 0, \ \ if \ \ |p-q|\geq 2,
\]
\[
\ \ \ \ \ \  \ \ \ \ \D_q(S_{p-1}u\D_pv)\equiv 0, \ \ if \ \ |p-q| \geq 5, \ \ \forall u,v \in \ms'(\rr)
\]
as well as
$
\|\D_qu\|_{L^p} \leq \|u\|_{L^p}, $ and $\|S_qu\|_{L^p} \leq C\|u\|_{L^p},$ $ \forall p \in [1,\infty]$,
with the aid of Young's Inequality, where $C$ is a positive constant independent of $q$.

Using the Littlewood-Paley decomposition, we may now define the Besov space. 
\begin{defin}
(Besov Spaces)
Let $s \in \rr$, $p,r \in [1 , \infty]$ and $u \in S'(\rr)$. 
Then we define the Besov space of functions as
$$
B^s_{p,r} = B^s_{p,r}(\rr) = \{ u \in S'(\rr) : \|u\|_{B^s_{p,r} }<\infty\} , 
$$
where
$$
\| u\|_{B^s_{p,r} }\dot = 
\begin{cases}
\left( \sum_{q \ge -1} (2^{sq} \| \Delta _q u \|_{L^p})^r \right)^{1/r} &\text{ if } 1 \le r< \infty
\\
\sup_{q\ge -1} 2^{sq}\|\Delta _q u \|_{L^p} &\text{ if } r= \infty.
\end{cases}
$$
In particular, $B_{p,r}^\infty=\bigcap_{s \in \rr}B_{p,r}^s$.
\end{defin}
There are several important, albeit standard  results which ensure Besov spaces are amenable to studying partial differential equations. 
\begin{lemma}\label{properties} 
Let $s \in \rr$, $1\leq p,r,p_j,r_j \leq \infty$, $j=1,2$, then
\begin{enumerate}
\item  Topological properties:  $B_{p,r}^s$ is a Banach space which is continuously emedded in $\ms'(\rr)$.

\item  Density:  $C_c^\infty$ is dense in $B_{p,r}^s$ $\iff$ $p,r \in [1,\infty)$. 

\item  Embedding:  $B_{p_1,r_1}^s \hookrightarrow B_{p_2,r_2}^{s-(\frac1p_1-\frac1p_2)}$, if $p_1 \leq p_2$ and $r_1 \leq r_2$.
\[
B_{p,r_2}^{s_2} \hookrightarrow B_{p,r_1}^{s_1}, \ \ \ \text{locally compact if} \ \ s_1<s_2.
\]

\item  Algebraic properties:  $\forall s >0$, $B_{p,r}^s\cap L^\infty$ is a Banach algebra.  $B_{p,r}^s$ is a Banach algebra $\iff$ $B_{p,r}^s \hookrightarrow L^\infty \iff s>\frac1p$ or $(s \geq \frac1p \  \text{and} \ r=1)$.  In particular, $B_{p,1}^{1/p}$ is continuously embedded in $B_{p,\infty}^{1/p} \cap L^\infty$ and $B_{p,\infty}^{1/p} \cap L^\infty$ is a Banach algebra.

\item  1-D Moser-type estimates:  
 For $s>0$,
\[
\|fg\|_{B_{p,r}^s} \leq C(\|f\|_{B_{p,r}^s}\|g\|_{L^\infty}+\|f\|_{L^\infty}\|g\|_{B_{p,r}^s}).
\]

\item 
For $s > \max \{ 1+\frac1p, \frac32 \}$, 
$$
\| fg\|_{B^{s-2}_{p,r} } \le C \| f\| _{B^{s-2}_{p,r} } \|g\| _{B^{s-1}_{p,r} }  .
$$

\item  Interpolation:  
\[
\|f\|_{B_{p,r}^{\theta s_1+(1-\theta)s_2}} \leq \|f\|_{B_{p,r}^{s_1}}^\theta\|g\|_{B_{p,r}^{s_2}}^{1-\theta}, \ \ \ \forall f \in B_{p,r}^{s_1} \cap B_{p,r}^{s_2}, \ \ \forall \theta \in [0,1].
\]

\end{enumerate}
\end{lemma}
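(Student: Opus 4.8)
The plan is to derive every item from the Littlewood-Paley blocks $\D_q$ of Lemma 2.1 together with Bernstein's inequalities, which I would record as the single hard analytic input: for $u$ whose Fourier transform is supported in a ball of radius $\sim 2^q$ one has $\|\D_q u\|_{L^b}\lesssim 2^{q(\frac1a-\frac1b)}\|\D_q u\|_{L^a}$ for $a\le b$, and $\|\p_x^k \D_q u\|_{L^p}\sim 2^{qk}\|\D_q u\|_{L^p}$ for $q\ge 0$; everything else is bookkeeping on a weighted sequence space. For item (1), the map $u\mapsto(\D_q u)_{q\ge -1}$ identifies $B^s_{p,r}$ isometrically with a closed subspace of the $\ell^r$-weighted direct sum of copies of $L^p$, so completeness of $L^p$ and $\ell^r$ gives that $B^s_{p,r}$ is a Banach space, and the reconstruction identity $u=\sum_q\D_q u$ together with a Bernstein bound on the pairing $\langle u,\phi\rangle$ against finitely many Schwartz seminorms yields the continuous embedding into $\ms'(\rr)$. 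For item (3), the embedding $B^s_{p_1,r_1}\hookrightarrow B^{s-(\frac1{p_1}-\frac1{p_2})}_{p_2,r_2}$ is immediate from Bernstein (pass from $L^{p_1}$ to $L^{p_2}$ on each block, absorbing $2^{q(\frac1{p_1}-\frac1{p_2})}$ into the weight) and the inclusion $\ell^{r_1}\hookrightarrow\ell^{r_2}$; the second embedding and its local compactness follow from monotonicity of the weights in $s$ and a Rellich argument on each frequency-localized piece.

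For the density statement (2) I would use the low-frequency truncations $S_N u$. When $r<\infty$ the tail $\sum_{q\ge N}(2^{sq}\|\D_q u\|_{L^p})^r\to 0$, so $S_N u\to u$ in $B^s_{p,r}$, and each $S_N u$ is then approximated by a compactly supported smooth function by a cutoff-and-mollify argument that needs $p<\infty$; conversely, if $r=\infty$ or $p=\infty$ one exhibits an element of $B^s_{p,r}$ sitting at positive distance from $C_c^\infty$, so density fails, giving the stated equivalence.

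The heart of the proof is items (4)--(6), which I would all deduce from Bony's decomposition
\[
fg=T_f g+T_g f+R(f,g),\qquad T_f g=\sum_q S_{q-1}f\,\D_q g,\quad R(f,g)=\sum_{|q-q'|\le 1}\D_q f\,\D_{q'}g.
\]
The spectral localization in the Remark ($\D_j(S_{q-1}f\,\D_q g)\equiv 0$ for $|j-q|\ge 5$, and the near-diagonal support of $R$) reduces the estimate of $\D_j(fg)$ to a sum of $O(1)$ relevant blocks. For the paraproducts I place the low-frequency factor in $L^\infty$ (via $\|S_{q-1}f\|_{L^\infty}\le C\|f\|_{L^\infty}$) and the high-frequency factor in $B^s_{p,r}$, while the remainder is summed along the diagonal, the hypothesis $s>0$ making the geometric series in the shifted weights converge; collecting terms gives the Moser estimate (5). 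The Banach-algebra claim (4) then follows by combining (5) with the embedding $B^s_{p,r}\hookrightarrow L^\infty$, which holds precisely when $s>\frac1p$, or $s=\frac1p$ with $r=1$—the endpoint being exactly where $\|\D_q u\|_{L^\infty}\lesssim 2^{q/p}\|\D_q u\|_{L^p}$ is summed in $\ell^1$. Item (6) is the same machinery with the regularity budget split asymmetrically ($s-2$ on one factor, $s-1$ on the other), and the condition $s>\max\{1+\frac1p,\frac32\}$ is what keeps all three pieces summable.

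Finally, for the interpolation inequality (7) I would factor $2^{(\theta s_1+(1-\theta)s_2)q}=(2^{s_1 q})^\theta(2^{s_2 q})^{1-\theta}$ and apply H\"older's inequality in the $\ell^r$ sum with conjugate exponents $1/\theta$ and $1/(1-\theta)$, which gives the bound directly (the $\|g\|$ on the right-hand side should read $\|f\|$). The main obstacle is the cluster (4)--(6): the paraproduct bookkeeping must pair the $L^\infty$ and $B^s_{p,r}$ norms correctly on each of the three pieces, and the endpoint cases—the algebra threshold $s=\frac1p$, $r=1$ in (4) and the sharp index $s>\max\{1+\frac1p,\frac32\}$ in (6)—force the Bernstein embedding into $L^\infty$ to be invoked exactly at the borderline, with no slack to spare.
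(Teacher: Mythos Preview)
The paper does not prove this lemma at all: it is stated as a compendium of standard Besov-space facts, with the blanket attribution ``these results may be found in \cite{danchin-book}'' at the top of Section~2, and no proof environment follows the statement. Your outline is the standard textbook route (Bernstein on each block for the embeddings, Bony's paraproduct decomposition for the product/Moser/algebra estimates, H\"older in $\ell^r$ for the interpolation), and it is correct; it is essentially what one finds in the cited reference, so there is no meaningful divergence to compare.
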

Finally, we shall use the following transport equation estimate extensively.

\begin{proposition} \label{transport_prop} 
Let $p,r \in [1,\infty]$ and $\sigma >  1+ \frac1p$ or $\sigma = 1+\frac1p$ when 
$r=1$. Assume $v_0 \in B^\sigma _{p,r}$, $g \in L^1 ( [ 0,T]; B^\sigma _{p,r} )$ 
and $\partial _x f \in L^1 
( [0, T] ; B^{\sigma -1} _{p,r} $. Let $v \in L^\infty ([0, T] B^\sigma _{p,r})\cap C ([0,T]; \mathcal S ' )$ solve the following transport equation, 
\begin{align}
\begin{cases}
 v_t + f v_x = g\\
 v(x,0) = v_0(x). 
\end{cases}
\end{align}
Then there exists a constant $C $ which depends upon $p,r$ and $\sigma$, such that 
$$
\| v (t) \| _{B^\sigma _{p,r}} \le e^{V(t) } \left( \| v (0) \| _{B^\sigma _{p,r}} + \int_0^t e^{-V(s) } \| g(s) \| _{B^\sigma _{p,r}} ds \right) ,
$$
where $V(t) = \int_0^t \|  \partial _x f \| _{B^{\sigma -1}_{p,r}} ds$.  If $  - \min\{ \frac1p, 1-\frac1p\} \le \sigma < 1+\frac1p$,  the above estimate holds with 
$V(t) = \int_0^t \|  \partial _x f \| _{B^{ 1/p}_{p,\infty} \cap L^\infty} ds$.
\end{proposition}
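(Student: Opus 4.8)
The plan is to prove the estimate by localizing the transport equation in frequency with the Littlewood--Paley blocks $\D_q$, running an $L^p$ energy estimate on each dyadic piece, and then reassembling in the Besov norm before closing with Gr\"onwall's inequality. Writing $v_q = \D_q v$ and applying $\D_q$ to the equation $v_t + f v_x = g$, one obtains the localized transport equation
\begin{align}
\partial_t v_q + f \partial_x v_q = \D_q g + R_q, \qquad R_q := [f, \D_q]\partial_x v = f\,\D_q\partial_x v - \D_q(f \partial_x v),
\end{align}
where $R_q$ is the commutator measuring the failure of $\D_q$ to commute with multiplication by $f$, and the initial condition is $v_q(0) = \D_q v_0$.

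First I would establish the single-block energy estimate. For $1 \le p < \infty$, multiply the localized equation by $|v_q|^{p-2}v_q$ and integrate over $\rr$; integrating the transport term by parts converts $\int f\partial_x v_q\,|v_q|^{p-2}v_q\,dx$ into $-\tfrac1p\int \partial_x f\,|v_q|^p\,dx$, which after dividing by $\|v_q\|_{L^p}^{p-1}$ yields
\begin{align}
\frac{d}{dt}\|v_q\|_{L^p} \le \tfrac1p\|\partial_x f\|_{L^\infty}\|v_q\|_{L^p} + \|\D_q g\|_{L^p} + \|R_q\|_{L^p}.
\end{align}
(For rigor this is carried out on a smooth regularization and then passed to the limit.) The endpoint $p = \infty$ is handled instead by the method of characteristics: integrating along the flow generated by $f$ gives $\|v_q(t)\|_{L^\infty} \le \|v_q(0)\|_{L^\infty} + \int_0^t(\|\D_q g\|_{L^\infty} + \|R_q\|_{L^\infty})\,ds$, so the same type of differential inequality holds (in fact with no boundary contribution from $\partial_x f$). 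Integrating in time, multiplying by $2^{q\sigma}$, and taking the $\ell^r$ norm in $q$—using Minkowski's integral inequality to bring the $\ell^r$ norm inside the time integral—produces
\begin{align}
\|v(t)\|_{B^\sigma_{p,r}} \le \|v_0\|_{B^\sigma_{p,r}} + \int_0^t\Big(\tfrac1p\|\partial_x f\|_{L^\infty}\|v\|_{B^\sigma_{p,r}} + \|g\|_{B^\sigma_{p,r}} + \big\|(2^{q\sigma}\|R_q\|_{L^p})_q\big\|_{\ell^r}\Big)\,ds.
\end{align}

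The crux of the argument, and the step I expect to be the main obstacle, is the commutator estimate controlling $R_q$. Using the Bony paraproduct decomposition of $f\,\partial_x v$ and careful bookkeeping of the frequency supports, one proves
\begin{align}
\big\|(2^{q\sigma}\|R_q\|_{L^p})_q\big\|_{\ell^r} \le C\,\|\partial_x f\|_{B^{\sigma-1}_{p,r}}\,\|v\|_{B^\sigma_{p,r}}
\end{align}
for $\sigma$ in the stated range. It is precisely this estimate—and the different behavior of the paraproduct remainder at low versus high regularity—that forces the hypothesis $\sigma > 1 + \tfrac1p$ (or $\sigma = 1 + \tfrac1p$ with $r = 1$) and, in the complementary range $-\min\{\tfrac1p, 1-\tfrac1p\} \le \sigma < 1 + \tfrac1p$, replaces $\|\partial_x f\|_{B^{\sigma-1}_{p,r}}$ by $\|\partial_x f\|_{B^{1/p}_{p,\infty}\cap L^\infty}$; this is exactly the distinction between the two forms of $V(t)$ in the statement.

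Finally I would assemble the pieces. In the high-regularity range the embedding $B^{\sigma-1}_{p,r}\hookrightarrow L^\infty$ (valid since $\sigma - 1 > \tfrac1p$) lets one absorb the $\|\partial_x f\|_{L^\infty}$ term into the commutator term, so that
\begin{align}
\|v(t)\|_{B^\sigma_{p,r}} \le \|v_0\|_{B^\sigma_{p,r}} + \int_0^t\Big(C\|\partial_x f\|_{B^{\sigma-1}_{p,r}}\|v\|_{B^\sigma_{p,r}} + \|g\|_{B^\sigma_{p,r}}\Big)\,ds.
\end{align}
An application of Gr\"onwall's inequality with $V(t) = \int_0^t\|\partial_x f\|_{B^{\sigma-1}_{p,r}}\,ds$ then gives the asserted bound, the constant $C$ from the commutator estimate appearing in the exponent and depending only on $p,r,\sigma$. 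The low-regularity range is identical once $\|\partial_x f\|_{B^{\sigma-1}_{p,r}}$ is replaced throughout by $\|\partial_x f\|_{B^{1/p}_{p,\infty}\cap L^\infty}$.
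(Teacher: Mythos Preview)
The paper does not actually supply a proof of this proposition: it is stated in Section~2 among the preliminary results, immediately after the Littlewood--Paley and Besov-space background taken from \cite{danchin-book}, and is invoked throughout Section~3 as a black box. So there is no ``paper's own proof'' to compare against.

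That said, your outline is the standard argument (essentially Danchin's) and is correct in its architecture: localize with $\Delta_q$, run the $L^p$ energy estimate on each block, control the commutator $R_q = [f,\Delta_q]\partial_x v$ via the Bony decomposition, take the weighted $\ell^r$ norm, and close with Gr\"onwall. You have correctly identified the commutator bound as the place where the regularity threshold $\sigma > 1+\tfrac1p$ (or the endpoint $\sigma = 1+\tfrac1p$, $r=1$) enters, and where the alternative $V(t)$ involving $B^{1/p}_{p,\infty}\cap L^\infty$ appears in the low-regularity range. One small point worth tightening: in the statement the exponential is $e^{V(t)}$ rather than $e^{CV(t)}$, so you should track that the constant from the commutator estimate is absorbed into the definition of $V$ (or note, as is common, that the proposition is really proved with $e^{CV(t)}$ and the paper's statement suppresses the constant). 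Otherwise the sketch is sound and matches what one finds in the references the paper cites.
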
  
%
%
\section{Proof of Theorem \ref{thm1}} 
Our proof of non-uniform dependence relies upon choosing two sequences of initial data,  $u_{0,n} (x)$ and $v_{0,n}(x)$, which converge to each other as $n$ tends towards $\infty$ in $B^s_{p,r}$, however, the corresponding solutions, denoted $u_n$ and $v_n$, remain bounded away from each other for any positive time. We begin by outlining the proof. 

We let $\widehat \phi (\xi)$ be a smooth bump function equal to $1$ when $|\xi| \le 1/4$ and support on the interval $|\xi|< \frac12$, and we choose  two constants $\delta$ and $\sigma$ which satisfy  $ 0 < \delta<\frac18$, $ \max\{ 1+\frac1p, s- \frac98\}  < \sigma < s-1$ and $0< \frac{8\delta}{p} < s-\sigma -1$. For integers $n \ge 10$, we choose the initial data 
\begin{align}
u_{0,n} (x) = 2^{-ns -\frac{\delta}{p} n } \phi(2^{-\delta n} x) \cos \left ( \frac{17}{12}  2^n x \right) , \quad v_{0,n} (x) = u_{0,n} (x) + 2^{-  \frac12 n  } \phi \left (2^{- \delta n } x   \right )  ,
\end{align}
which satisfies the following estimate. 
\begin{lemma} \label{LYZ} 
For any $s,{s'}  \in \mathbb R$ and  $(p,r ) \in [1,\infty] \times [1, \infty)$, we have 
\begin{align}&
\|   2^{-  \frac12 n   }  \phi(2^{- \delta n } x   )  \|_{B^{{s'} } _{p,r} } =  2^{ \frac{\delta}{p} n-  \frac12 n } \| \phi \| _{B^{{s'} } _{p,r} } 
\\
&
  \| 2^{-ns - \frac {\delta }{p} n}  \phi(2^{-\delta n} x) \cos (  \frac{17}{12} 2^{  n}  x)   \|_{B^{{s'} } _{p,r} }\le 2^{n ( {s'}  - s) } \| \phi\|_{B^{{s'} } _{p,r} } 
\\
&
  \| 2^{-ns - \frac {\delta }{p} n}  \phi(2^{-\delta n} x) \sin (  \frac{17}{12} 2^{  n}  x)   \|_{B^{{s'} } _{p,r} }\le 2^{n ( {s'}  - s) } \| \phi\|_{B^{{s'} } _{p,r} } 
\\
&
 \liminf _{n\rightarrow \infty}    \| 2^{-ns - \frac {\delta }{p} n}  \phi^3(2^{-\delta n} x) \sin (  \frac{17}{12} 2^{  n}  x)   \|_{B^{s } _{p,r} }  \approx 1  .
\end{align}     
\end{lemma}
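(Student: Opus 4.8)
The plan is to exploit the one structural fact underlying all four estimates in Lemma~\ref{LYZ}: each function that occurs has Fourier transform supported in a single dyadic annulus, so its Besov norm collapses to the single term $2^{s'q}\|\Delta_q(\cdot)\|_{L^p}$ of the defining sum, after which the lemma reduces to elementary dilation scaling plus, for the last line, one standard oscillation fact.

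\emph{The low-frequency bump (first line).} Since $\delta>0$ and $n\ge 10$, the dilate $\phi(2^{-\delta n}x)$ has Fourier transform $2^{\delta n}\widehat\phi(2^{\delta n}\xi)$, supported in $\{|\xi|<\tfrac12 2^{-\delta n}\}\subset\{|\xi|<\tfrac12\}$; on that set $\chi\equiv 1$ and $\vp(2^{-q}\cdot)\equiv 0$ for every $q\ge 0$, so only $\Delta_{-1}$ acts, and likewise for $\phi$ itself. Therefore
\[
\|\phi(2^{-\delta n}\cdot)\|_{B^{s'}_{p,r}}=2^{-s'}\|\phi(2^{-\delta n}\cdot)\|_{L^p}=2^{-s'}2^{\delta n/p}\|\phi\|_{L^p}=2^{\delta n/p}\|\phi\|_{B^{s'}_{p,r}},
\]
and multiplying by $2^{-n/2}$ gives the first line with equality.

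\emph{The two modulated bumps (second and third lines).} Put $g_n(x)=\phi(2^{-\delta n}x)\cos(\tfrac{17}{12}2^n x)$ (and similarly with $\sin$). Then $\widehat{g_n}$ is $\tfrac12$ times the sum of the two translates of $2^{\delta n}\widehat\phi(2^{\delta n}\cdot)$ by $\pm\tfrac{17}{12}2^n$, hence supported in $\{\,\big||\xi|-\tfrac{17}{12}2^n\big|<\tfrac12 2^{-\delta n}\,\}$. The precise modulation frequency is chosen so that $\tfrac43<\tfrac{17}{12}<\tfrac32$: because $\tfrac83 2^{n-1}=\tfrac43 2^n$ and $\tfrac34 2^{n+1}=\tfrac32 2^n$, for $n\ge 10$ (so that $\tfrac12 2^{-\delta n}<\tfrac1{12}2^n$) this support sits strictly between the supports of $\vp(2^{-(n-1)}\cdot)$ and $\vp(2^{-(n+1)}\cdot)$, hence misses $\mathrm{supp}\,\chi$ and every $\mathrm{supp}\,\vp(2^{-q}\cdot)$ with $q\ne n$. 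So only $\Delta_n$ acts, and
\[
\|g_n\|_{B^{s'}_{p,r}}=2^{s'n}\|g_n\|_{L^p}\le 2^{s'n}\|\phi(2^{-\delta n}\cdot)\|_{L^p}=2^{s'n}2^{\delta n/p}\|\phi\|_{L^p}.
\]
Multiplying by $2^{-ns-\delta n/p}$ and using $\|\phi\|_{L^p}=2^{s'}\|\phi\|_{B^{s'}_{p,r}}$ (a fixed factor, since $s'$ is fixed, which one absorbs into the constant) produces the second and third lines.

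\emph{The cubic bump (fourth line, the crucial lower bound).} Let $h_n(x)=\phi^3(2^{-\delta n}x)\sin(\tfrac{17}{12}2^n x)$. Since $\widehat{\phi^3}=\widehat\phi*\widehat\phi*\widehat\phi$ is supported in $\{|\xi|<\tfrac32\}$, the transform $\widehat{h_n}$ is supported in $\{\,\big||\xi|-\tfrac{17}{12}2^n\big|<\tfrac32 2^{-\delta n}\,\}$, which for $n\ge 10$ again lies in the clean annulus $(\tfrac43 2^n,\tfrac32 2^n)$ where only $\Delta_n$ acts. Hence
\[
2^{-ns-\frac{\delta}{p}n}\|h_n\|_{B^s_{p,r}}=2^{-ns-\frac{\delta}{p}n}\,2^{sn}\|h_n\|_{L^p}=2^{-\frac{\delta}{p}n}\|h_n\|_{L^p},
\]
and the change of variables $y=2^{-\delta n}x$ turns the right-hand side into $\big(\int_{\rr}|\phi(y)|^{3p}\,|\sin(\tfrac{17}{12}2^{(1+\delta)n}y)|^{p}\,dy\big)^{1/p}$. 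Since $\phi\not\equiv 0$, $|\phi|^{3p}\in L^1(\rr)$, and $|\sin(N\cdot)|^{p}\rightharpoonup c_p:=\tfrac1{2\pi}\int_0^{2\pi}|\sin t|^{p}\,dt>0$ weak-$\ast$ in $L^\infty(\rr)$ as $N\to\infty$, the integral converges to $c_p\|\phi\|_{L^{3p}}^{3p}$; hence $\liminf_{n\to\infty}2^{-ns-\frac{\delta}{p}n}\|h_n\|_{B^s_{p,r}}=c_p^{1/p}\|\phi\|_{L^{3p}}^{3}\approx 1$ (indeed the limit exists).

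\emph{Main obstacle.} There is no real analytic difficulty here; the only points requiring care are the Fourier-support bookkeeping that isolates a single dyadic block — which is exactly why the modulation frequency $\tfrac{17}{12}2^n$ (the midpoint of the clean band $(\tfrac43,\tfrac32)\cdot 2^n$) and the restriction $n\ge 10$ are built in — and, for the fourth line, the standard weak-$\ast$ averaging of $|\sin(N\cdot)|^{p}$ against the fixed $L^1$ weight $|\phi|^{3p}$ (a Riemann--Lebesgue/periodicity argument), which carries the genuine content of the lower bound. Everything else is exact algebra and dilation scaling; the construction is the FORQ analogue of the one in \cite{LYZ}.
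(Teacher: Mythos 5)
Your proposal is correct and follows essentially the same route as the paper: compute the Fourier support of each function, observe that it meets only one dyadic block ($\Delta_{-1}$ for the bump, $\Delta_n$ for the modulated bumps since $\tfrac43<\tfrac{17}{12}<\tfrac32$), reduce the Besov norm to a single weighted $L^p$ norm, and rescale, with the averaging of $|\sin(N\cdot)|^p$ supplying the lower bound in the last line. You are in fact somewhat more explicit than the paper at the two places it waves its hands (why only $\Delta_n$ survives, and the "simple calculation" giving the lower bound), and you correctly flag the harmless constant $2^{s'}$ discrepancy between $\|\phi\|_{L^p}$ and $\|\phi\|_{B^{s'}_{p,r}}$ that the paper silently absorbs.
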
 
\begin{proof}
Using
$$\mathcal F \left( \phi(2^{-\delta n} \cdot)\right) (\xi ) =  2^{\delta n}\widehat \phi (2^{\delta n} \xi)$$ 
we see that $supp ( \mathcal F [  \phi( 2^{-\delta n}   x) ])   \subseteq  \{ \xi:   |\xi| \in [    - \frac1{2^{\delta n+1}}  ,  \frac1{2^{\delta n+1} } ] \}$, from which we can see 
$$
\Delta _q  \phi(2^{-\delta n} x) = \begin{cases}
\phi(2^{-\delta n} x)  \ \text{ if } \  q = -1, 
\\
0 \ \text{ if } q \ge 0 .
\end{cases} 
$$
Therefore, 
$$
\|   2^{-  \frac12 n - \frac{\delta}{p} n}  \phi(2^{- \delta n } x   )  \|_{B^{{s'} } _{p,r} }  =2^{-{s'}  } 2^{-  \frac12 n - \frac{\delta}{p} n}  \|     \phi(2^{- \delta n } x   )  \|_{ L^p  }  =  2^{-{s'}  } 2^{-  \frac12 n  } \|   \phi   \| _{L^p } ,
$$ 
which completes the first estimate in the lemma. Next, we compute the support of \\
$\mathcal F \left[   \phi(2^{-\delta n} x) \cos (  \frac{17}{12} 2^{  n}  x)  \right] (\xi)$,
 and the support of $  ( \mathcal F [  \phi( 2^{-\delta n}   x) \sin( \frac{17}{12} 2^n x)]) $ and show that they both are subsets of  $  \{ \xi:   |\xi| \in [  \frac{17}{12} 2^{n  }   - \frac1{2^{1+\delta n} }  ,  \frac{17}{12} 2^{n   }  +\frac1{2^{1+\delta n} } ] \}$. 
Indeed, using the identity $\sin(\theta) = \frac12 i( e^{-i\theta} - e^{i\theta} )$ we have 
\begin{align} 
 \mathcal F [  \phi(2^{-\delta n} x) \sin( \frac{17}{12} 2^n x)]  & = \int e^{-ix\xi } \phi(2^{-\delta n} x) \sin(2^n x) dx 
 \\
 & 
 = \frac{i  } 2  \int (e^{-i   x(\xi  +   \frac{17}{12} 2^n)}- e^{-i x(\xi  -   \frac{17}{12} 2^n)}  ) \phi( 2^{-\delta n }  x)  dx
 \\
 & 
 = \frac{i 2^{\delta n} } 2  \int (e^{-i 2^{\delta n} x(\xi  +   \frac{17}{12} 2^n)}- e^{-i2^{\delta n} x(\xi  -   \frac{17}{12} 2^n)}  ) \phi( x)  dx  ,
\end{align} 
which clearly equals $\frac{i2^{\delta n} }2\widehat \phi( 2^{\delta n } \xi+ \frac{17}{12} 2^{ ( 1+\delta ) n} ) - \frac{i2^{\delta n}}2\widehat \phi(2^{\delta n } \xi- \frac{17}{12} 2^{ ( 1+\delta ) n} ) $. A similar calculation shows 
$ \mathcal F [  \phi(x) \cos( \frac{17}{12} 2^n x)]  = \frac{2^{\delta n}}2\widehat \phi(2^{\delta n }\xi+ \frac{17}{12} 2^{ ( 1+\delta ) n} ) ) + \frac{2^{\delta n}}2\widehat \phi(2^{\delta n} \xi -  \frac{17}{12} 2^{ ( 1+\delta ) n} ) ) $, and therefore it has the same support. From here it is easy to see that  
$$
\Delta _q  \phi(2^{-\delta n} x)  \cos (2^n x) = \begin{cases}
\phi(2^{-\delta n} x)  \cos ( \frac{17}{12} 2^n x)  \ \text{ if } \  q = n, 
\\
0 \ \text{ if } q \neq n.
\end{cases} 
$$
Therefore, we may compute 
\begin{align}
 2^{-ns - \frac {\delta }{p} n}    \|  \phi(2^{-\delta n} x) \cos (  \frac{17}{12} 2^{  n}  x)   \|_{B^{{s'} } _{p,r} }
 & = 
 2^{-ns - \frac {\delta }{p} n}   2^{n{s'}  }    \|  \phi(2^{-\delta n} x) \cos ( \frac{17}{12}  2^{  n}  x)   \|_{ L^p }
 \\
 & \le 
 2^{n({s'}  -s)  - \frac {\delta }{p} n}     \|  \phi(2^{-\delta n} x)  \|_{ L^p } =  2^{n({s'}  -s) } \|  \phi( x)  \|_{ L^p } ,
 \end{align}
 which completes the second and third estimates. 
 We now consider the fourth quantity and notice that a similar calculation shows that 
 $$
  \mathcal F [  \phi^3 (2^{-\delta n} x) \sin( \frac{17}{12} 2^n x)]    = \frac{i2^{\delta n} }2\widehat { \phi^3} ( 2^{\delta n } \xi+ \frac{17}{12} 2^{ ( 1+\delta ) n} ) - \frac{i2^{\delta n}}2\widehat { \phi^3} (2^{\delta n } \xi- \frac{17}{12} 2^{ ( 1+\delta ) n} ) .
 $$
 Therefore,  we can compute the $B^s_{p,r}$ norm by
 \begin{align}
 2^{-ns - \frac {\delta }{p} n}    \|  \phi ^3(2^{-\delta n} x) \sin (  \frac{17}{12} 2^{  n}  x)   \|_{B^{s} _{p,r} }
 & = 
 2^{- \frac {\delta }{p} n}       \|  \phi ^3(2^{-\delta n} x) \sin (  \frac{17}{12} 2^{  n}  x)   \|_{ L^p } . 
 \end{align}
A similar calculation from the second estimate  shows that  the last line is bounded from above.    A change of variables yields 
 $$ 
  \|   \phi^3(2^{-\delta n} x) \sin (  \frac{17}{12} 2^{  n}  x)   \|_{ L^p  }  ^p =  2^{\delta n } \int _{-\infty}^\infty   |  \phi^{3} (  x) \sin ( \frac{17}{12}  2^{  n+\delta n}  x)   | ^p  dx , 
 $$ 
 and a simple calculation shows the remaining integral is bounded from below by a constant independent of $n$. 
\end{proof} 
We summarize several important estimates in the following corollary.
\begin{corollary} \label{cor}
For any ${s'}  \in \mathbb R$, $s>2+1/p$ and  $(p,r ) \in [1,\infty] \times [1, \infty)$, we have 
 \begin{align*}&
\| u_{0,n} \| _{L^\infty} \approx 2^{-n(s + \frac{\delta }{p} )} , \quad 
  \| u_{0,n} \| _{ B^{s'} _{p,r} }  \lesssim 2^{ n({s'}  - s )  }  , 
 \quad 
 \| v_{0,n} \| _{ B^{s'} _{p,r} }  \lesssim 2^{n({s'}  -   s ) }  + 2^{n (\frac{\delta}{p} -\frac12 )} ,  
\\
&  
 \| v_{0,n} \| _{L^\infty} \approx  2^{ - \frac12 n }  , \quad   \| \partial _x  v_{0,n} \| _{ B^{s'} _{p,r} }  \lesssim    2^{n(    \frac{ \delta} {p}  -\frac12 - \delta )  } + 2^{n({s'}  +1 - s)}  ,
   \quad  
 \| \partial _x  v_{0,n} \| _{  L^\infty }  \lesssim        2^{-(\frac12  + \delta) n }   ,
 \\ 
 &
  \| \partial _x   ^2 v_{0,n} \| _{ B^{s'} _{p,r} }  \lesssim   2^{-(\frac12  -  \frac{ \delta} {p}  + 2 \delta ) n } +  2^{n( {s'}  +2 -s )} 
  ,
 \quad \| \partial _x   ^2 v_{0,n} \| _{ L^\infty  }  \lesssim   2^{-( \frac12 +2 \delta ) n } +  2^{n( 2 -  s-\frac{\delta}{p} )}   .
  \notag
\end{align*} 
\end{corollary}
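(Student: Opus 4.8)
The plan is to read off all eight estimates from Lemma~\ref{LYZ} and the spectral-localization argument in its proof, combined with the Leibniz rule, the scaling behaviour of the $L^p$ and $B^{s'}_{p,r}$ norms, and the trivial bounds $\|\cos(\tfrac{17}{12}2^n\,\cdot\,)\|_{L^\infty}=\|\sin(\tfrac{17}{12}2^n\,\cdot\,)\|_{L^\infty}=1$. The key observation is that $\phi$, $\phi'$ and $\phi''$ are all Schwartz functions whose Fourier transforms are supported in $\{|\xi|<\tfrac12\}$; hence, exactly as in the proof of Lemma~\ref{LYZ}, each rescaled factor $\phi^{(k)}(2^{-\delta n}x)$ is spectrally supported in the single dyadic block $q=-1$, and each modulated factor $\phi^{(k)}(2^{-\delta n}x)\cos(\tfrac{17}{12}2^n x)$ or $\phi^{(k)}(2^{-\delta n}x)\sin(\tfrac{17}{12}2^n x)$ is spectrally supported in the single block $q=n$. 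Consequently the $B^{s'}_{p,r}$ norm of every such function equals $2^{-s'}$ (resp.\ $2^{ns'}$) times its $L^p$ norm, and the latter is evaluated by the change of variables $y=2^{-\delta n}x$, which produces a factor $2^{\delta n/p}$.

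With this dictionary in place the first block of estimates is immediate. The bound $\|u_{0,n}\|_{B^{s'}_{p,r}}\lesssim 2^{n(s'-s)}$ is precisely the second estimate of Lemma~\ref{LYZ}; the bound for $\|v_{0,n}\|_{B^{s'}_{p,r}}$ follows by the triangle inequality together with the first estimate of Lemma~\ref{LYZ}. The upper bound $\|u_{0,n}\|_{L^\infty}\le 2^{-n(s+\delta/p)}\|\phi\|_{L^\infty}$ and the upper bound $\|v_{0,n}\|_{L^\infty}\le 2^{-n/2}\|\phi\|_{L^\infty}+\|u_{0,n}\|_{L^\infty}$ are the triangle inequality, and since $s>\tfrac12$ the second summand in the latter is negligible. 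For the matching lower bounds one evaluates at a point $x_n$ at which $\phi(2^{-\delta n}x_n)$ is, say, within a factor $\tfrac12$ of $\|\phi\|_{L^\infty}$: because $2^n\gg 2^{\delta n}$, the trigonometric factor runs through many full periods while the slowly varying envelope stays near its maximum, so $x_n$ can be chosen so that $\cos(\tfrac{17}{12}2^n x_n)$ is also close to $1$, and then $|u_{0,n}(x_n)|\gtrsim 2^{-n(s+\delta/p)}$ while $|v_{0,n}(x_n)|\gtrsim 2^{-n/2}$ (the $u_{0,n}$ contribution being swamped by $2^{-n/2}$).

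For the derivative estimates one simply differentiates $v_{0,n}=u_{0,n}+2^{-n/2}\phi(2^{-\delta n}x)$ and applies the Leibniz rule: each term of $\partial_x^j v_{0,n}$ ($j=1,2$) is a fixed constant times one of the spectrally localized building blocks above, multiplied by an explicit power of $2^n$ recording how many derivatives landed on the trigonometric factor (each contributing a factor $2^n$) versus on the envelope (each contributing a factor $2^{-\delta n}$). Estimating each block by the dictionary of the first paragraph and collecting the powers of $2$ produces exactly the stated sums, the first exponential term arising from the $2^{-n/2}\phi(2^{-\delta n}x)$ summand and the second from $u_{0,n}$; the $L^\infty$ derivative bounds are obtained identically, using only $\|\cos\|_{L^\infty},\|\sin\|_{L^\infty}\le 1$ and the scale invariance of $\|\cdot\|_{L^\infty}$. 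The only non-mechanical point is making the two lower bounds $\|u_{0,n}\|_{L^\infty}\approx 2^{-n(s+\delta/p)}$ and $\|v_{0,n}\|_{L^\infty}\approx 2^{-n/2}$ rigorous, i.e.\ ruling out cancellation of the slowly varying envelope by the oscillation of $\cos(\tfrac{17}{12}2^n\,\cdot\,)$; this is precisely where the scale separation $\delta<\tfrac18<1$ is used. The remaining bookkeeping of exponents is routine, and the hypotheses $0<\delta<\tfrac18$ and $s>2+\tfrac1p$ ensure that the two listed terms dominate every term generated by the Leibniz expansion (and that in the case of $\|\partial_x v_{0,n}\|_{L^\infty}$ the envelope term $2^{-(1/2+\delta)n}$ alone suffices).
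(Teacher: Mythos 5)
Your proof is correct and is exactly the derivation the paper intends: the corollary is stated without proof as a direct consequence of Lemma~\ref{LYZ}, and your dictionary (spectral localization of each building block in a single dyadic block, change of variables for the $L^p$ norms, Leibniz rule for the derivatives, and a point evaluation for the two lower bounds) reproduces every stated exponent. No gaps.
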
 

We let $T < \inf \{ T_n\} $, $T_n$ the minimum of the lifespan of the solutions $u_n$ and $v_n$ (the well-posedness argument ensures that this infimum  is positive, so long as the initial data is uniformly bounded). 
Our proof proceeds by showing the following results.
\begin{enumerate}
\item  $\| u_{0,n} \|_{B^s_{p,r}}  \approx 1$, and for all for all $t \in [0,T]$,  $  \lim_{n\rightarrow \infty} \| u_n - u_{0,n} \|_{B^s_{p,r}} = 0$ 
\item  $\| v_{0,n} \|_{B^s_{p,r}}  \approx 1$, $ \lim_{n\rightarrow \infty} \| u_{0,n} - v_{0,n}  \|_{B^s_{p,r}}   =0$, and for all for all $t \in [0,T]$,  
$ \lim_{n\rightarrow \infty} \| v_n -  w_n \|_{B^s_{p,r}} =   c t^{\beta} $, $1<  \beta $,
where the  {\em approximate solution}, $w_n(x,t)$, is defined as
\begin{align}
w_ n = v_{0,n} +  t  v_{0,n} ^2  \partial _x v_{0,n}  .
\end{align}
\item
We will show that there exists a positive constant $c$, such that for any small positive time
\begin{align}
\liminf_{n\rightarrow \infty} 
 \| w_n - u_{0,n} \|_{B^s_{p,r}}  \ge c t. 
\end{align} 
\item 
By the triangle inequality, this yields that  for any small positive $t$
\begin{align}
 \| u_n - v_{n} \|_{B^s_{p,r}}   \ge \| w_n - u_{0,n} \|_{B^s_{p,r}}  -   \| u_n - u_{0,n} \|_{B^s_{p,r}} - \| v_n -  w_n \|_{B^s_{p,r}} . 
 \end{align} 
Combining the previous results, we can conclude that there exists a constant $c$ such that  for any small $ t>0$, $ \liminf_{n\rightarrow \infty}  \| u_n - v_{n} \|_{B^s_{p,r}}  \ge c t$, which proves the theorem. 
\end{enumerate}

\vskip0.1in
\noindent
{\bf Proof of (1).} By Lemma   \ref{LYZ}, $\| u_{0,n} \|_{B^s_{p,r}}  = C$, where $C$ is a constant which depends only upon $\phi$. The local well-posedness result tells us that for all $n$, there exists $T>0$ with $u_n \in C([0,T]; B^s_{p,r})$. We re-write the FORQ equation as 
\begin{align}
 u_t  + ( u^2  - (\partial _x u) ^2 ) \partial _x u + N(u) = 0.
\end{align}
We estimate $ \| u_n - u_{0,n} \|_{B^s_{p,r}} $ by setting $\widetilde u_n = u_n - u_{0,n}$ and notice that $\widetilde u_n $ satisfies the following transport equation
\begin{align} \notag
  \partial_t \widetilde u _ n + ( u_n^2  - (\partial_x u_n)^2 )  \partial_x \widetilde u_n  = - ( u_n^2  - (\partial_x u_n)^2 )  \p_x u_{0,n}  -N(u_n)     .
\end{align}
We now apply Proposition \ref{transport_prop} to conclude  that 
\begin{align}\label{3.7} 
\|  \widetilde u \|_{B^{s-1} {p,r} } \le e^{CV(t) } \int_0^t e^{-C V(\tau) } \| ( u_n^2  - (\partial_xu_n)^2 ) \p_x u_{0,n}    + N(u_n)\| _{B^{s-1} _{p,r} } d\tau ,
\end{align}
where $V(t) = \int_0^t \| \partial _x  ( u_n^2  - (\partial_xu_n)^2 ) \|_{B^{{s-1} -1} _{p,r} }  d\tau$.

Using property (4) of Lemma \ref{properties}  we have 
\begin{align} 
 \| \partial _x  ( u_n^2  -  (\partial_xu_n)^2  ) \|_{B^{{s-2} } _{p,r} }  \le 
  \|   ( u_n^2  -  (\partial_xu_n)^2   ) \|_{B^{{s-1} } _{p,r} }  \le  \| u_n \|_{B^{{s-1} } _{p,r} } ^2  +   \| \partial_ x u_n \|_{B^{{s-1} } _{p,r} } ^2 \le 2\| u_n \|_{B^{s} _{p,r} } ^2,
\end{align} 
so $V(t)$ is bounded for all $t \in [0, T]$. 
Next, we use property (5) of Lemma \ref{properties} to estimate 
\begin{align} \notag
  \|  ( u_n^2  - (\partial_x u_n)^2 ) \partial_x u_{0,n}  \|_{B^{{s-1} } _{p,r} }
\le    \|  u_n^2  - (\partial_xu_n)^2 \|_{B^{{s-1} } _{p,r} }  \|  \partial _x u_{0,n}   \|_{ L^\infty } + 
  \|  u_n^2  - (\partial_x u_n)^2 \|_{ L^\infty } 
 \|  \partial _x u_{0,n}   \|_{B^{{s-1} } _{p,r} } .
\end{align}
We  use  property (4) in Lemma \ref{properties}  and the estimate found in Lemma \ref{LYZ} and its corollary to find $\| ( \partial_xu_n)^2 \|_{ L^\infty } \le \|  (\partial_xu_n)^2  \|_{ B^{{s-2}   }_{p,r}  } \lesssim  \|  u_{0,n} \|_{ B^{{s-1}   }_{p,r}  }  ^2 \approx 2^{ -2n } $. Using these estimates, we find 
\begin{align} \label{first_term} 
  \|  ( u_n^2  - (\partial_xu_n)^2   ) \p_x u_{0,n}   \|_{B^{{s-1} } _{p,r} }
\lesssim     2^{n(1-s +\frac{\delta}{p} ) } + 
 2^{ -2} 
\lesssim 2^{n \beta },
\end{align}
where  $\beta = \max\{  1+\frac{\delta}{p} -s, -2\}  <-1$. 
Next we estimate 
\begin{align}
  \|  N(u_n)  \|_{B^{{s-1} } _{p,r} }  \le  
    \|    \frac23 u_n ^3  +u_n(\partial_x u_n) ^2  \|_{B^{{s-1} -1} _{p,r} }  +   \|     \frac13 ( \partial_ xu_n)^3 \|_{B^{{s-1} -2} _{p,r} } . 
\end{align}
We use property (6) of Lemma \ref{properties}  to estimate the second term on the right hand side, while we use the algebra property (4) to estimate the first term on the right hand side of the above inequality 
\begin{align}
  \|  N(u_n)  \|_{B^{{s-1} } _{p,r} }  \lesssim
    \|   u_n    \|_{B^{{s-2}} _{p,r} } ^3 +\| u_n  \|_{B^{{s-2} } _{p,r} } \|  \partial_x u_n   \|_{B^{{s} -2} _{p,r} } ^2 +   \|    \partial_ xu_n  \|_{B^{\bar  \sigma -1} _{p,r} } ^2   \|    \partial_ xu_n  \|_{B^{\bar\sigma-2} _{p,r} }, 
\end{align}
where $\bar \sigma = \max\{{s-1} , 3/2\}$.
Using Corollary \ref{cor}  we find
\begin{align} \label{second_term}
  \|  N(u_n)  \|_{B^{{s-1} } _{p,r} }  \lesssim
    2^{-6n  } +     2^{-2n  }  2^{ -n }  +       2^{n(2 \bar\sigma  -2s) } 2^{n( \bar\sigma  -1-s) } \lesssim   2^{-2n  } . 
\end{align}
Combining estimates \eqref{first_term} and \eqref{second_term}  with inequality \eqref{3.7} we find 
\begin{align} 
\|  \widetilde u  _ n\|_{B^{s-1} _{p,r} } \lesssim         2^{n \beta  } + 2^{ -2  n}   .
\end{align}
From the well-posedness argument, we have the following estimate on the solution
\begin{align} 
\|    u _ n\|_{B^{s+1} _{p,r} } \lesssim    \|    u _ {0,n} \|_{B^{s+1}_ {p,r} }  \approx 2^{n} , 
\end{align}
which, by the triangle inequality,  implies $\|  \widetilde u  _ n\|_{B^{s+1} _ {p,r} }   \lesssim2^{n}$.  Now, applying the interpolation inequality, property (7) of Lemma \ref{properties}, we find 
\begin{align} 
\|   \widetilde u _ n\|_{B^{s } _{p,r} } \lesssim    \|   \widetilde u _ n\|_{B^{s-1 } _{p,r} }  ^{1/2} \|   \widetilde u _ n\|_{B^{s+1 } {p,r} }  ^{1/2} \lesssim   2^{ \frac12 \beta n}2^{\frac12n} , 
\end{align}
which clearly tends towards zero as $n$ tends towards infinity, and this completes the proof of (1).

\vskip0.1in
\noindent
{\bf Proof of (2).}   It is clear that   and $ \lim_{n\rightarrow \infty} \| u_{0,n} - v_{0,n}  \|_{B^s_{p,r}}   =0$. We must  evaluate 
$ \lim_{n\rightarrow \infty} \| v_n -  w_n \|_{B^s_{p,r}}  $
where $w_ n = v_{0,n} -  t  v_{0,n} ^2 \partial _x v_{0,n}  .$  We denote $\widetilde w_n = v_n -  w_n$ and we find that
\begin{align}
\partial_ t \widetilde w_n   + ( v_n^2  - (  \partial_x v_n)^2 )  \partial_x \widetilde w_n  =  - ( v_n^2  - (  \partial_x v_n)^2 ) \partial _x w_ n   -N(v_n)  + v_{0,n} ^2 \partial _x v_{0,n}  ,
\end{align}
and after adding and subtracting $v_{0,n}^2 \partial_x w_n$ this is equivalent to
\begin{align}\notag
\partial_ t \widetilde w_n   +  ( v_n^2  - (  \partial_x v_n)^2 )     \partial_x \widetilde w_n & =  (  \partial_x v_n)^2 \partial_x w_n  -  \widetilde w_n  ( v_n + v_{0,n} )\partial _x w_ n 
\\&
+ t v_{0,n}^2  \partial_x v_{0,n}  ( v_n + v_{0,n} ) \partial _x w_ n   + t v_{0,n}^2 \partial_x ( v_{0,n}^2\partial_x v_{0,n}) -N(v_n)     .
\end{align}
We now apply Proposition \ref{transport_prop} to conclude  that 
\begin{align} \notag
\|  \widetilde w \|_{B^\sigma_{p,r} } &\le e^{CV(t) } \int_0^t e^{-C V(\tau) } \|   ( \partial _ x v_n)^2 \partial_x w_n    -  N(v_n)   -  \widetilde w_n  ( v_n + v_{0,n} )\partial _x w_ n 
 \| _{B^\sigma_{p,r} }  
 \\ & +
 \tau \|    v_{0,n}^2  \partial_x v_{0,n}  ( v_n + v_{0,n} ) \partial _x w_ n   +  v_{0,n}^2 \partial_x ( v_{0,n}^2\partial_x v_{0,n})
 \| _{B^\sigma_{p,r} } 
 d\tau ,
 \label{3.18} 
\end{align}
where $V(t) = \int_0^t \| \partial _x   ( v_n^2  - (  \partial_x v_n)^2 )  \|_{B^{\sigma-1} _{p,r} }  d\tau$. 
As in the proof of (1), $V(t)$ is bounded and we use Corollary \ref{cor} to find 
\begin{align} 
\| N(v_n)\| _{B^\sigma_{p,r} }  & \lesssim
    \|   v_n    \|_{B^{\sigma-1} _{p,r} } ^3 +\| v_n  \|_{B^{\sigma-1} _{p,r} } \|  \partial_x v_n   \|_{B^{\sigma-1} _{p,r} } ^2 +   \|    \partial_ xv_n  \|_{B^{\bar \sigma-1} _{p,r} } ^2   \|    \partial_ xv_n  \|_{B^{\bar\sigma-2} _{p,r} }
    \\ 
    & \lesssim 2^{3n ( \frac{\delta}{p} - \frac12) } + 2^{n(  \frac{\delta}{p} - \frac12) } \left( 2^{n ( \frac{\delta}{p} - \frac12 - \delta  ) } + 2^{n(\sigma - s )} \right)^2 + \left( 2^{n ( \frac{\delta}{p} - \frac12 - \delta  ) } + 2^{n( \bar \sigma - s )} \right)^3
\end{align}
where $\bar \sigma = \max\{ \sigma , \frac32\} $. Using $\delta <  \frac18$, we see that $\delta/p - 1/2  <- 3/8$ and therefore  we obtain 
\begin{align} 
\| N(v_n)\| _{B^\sigma_{p,r} }  &  \lesssim 2^{ -\frac98 n   } + 2^{ -\frac38 n  } \left( 2^{ -\frac12 n   } + 2^{n(\sigma - s )} \right)^2 +  2^{ -\frac32 n  }   \lesssim
2^{ -\frac98 n   } .
 \end{align}
  
  Next we estimate $  \|   ( \partial _ x v_n)^2\partial_x w_n   \| _{B^\sigma_{p,r} }   =   \|   ( \partial _ x v_n)^2\partial_x  \left( v_{0,n} -  t  v_{0,n} ^2 \partial _x v_{0,n} \right)   \| _{B^\sigma_{p,r} } $. We break this into two pieces and find 
\begin{align} \notag
  \| &   ( \partial _ x v_n)^2\partial_x  \left( v_{0,n}   \right)   \| _{B^\sigma_{p,r} }
  \lesssim
   \| \partial_x v_n\| _{B^\sigma_{p,r} }  \| \partial_x v_n \partial_x v_{0,n} \| _{L^\infty} 
  +  
   \| \partial_x v_n\| _{ L^\infty }  \| \partial_x v_n \partial_x v_{0,n} \| _{B^\sigma_{p,r} }
   \\
   & \lesssim 
   \|   v_n\| _{B^{\sigma+1}_{p,r} }  \|   v_n\| _{B^{\sigma}_{p,r} }\|  \partial_x v_{0,n} \| _{L^\infty} 
  +  
   \|  \partial_ xv_n\| _{B^{\sigma-1}_{p,r} } \left( \|  \partial _x v_n\|  _{B^{\sigma-1}_{p,r} } \|\partial_x v_{0,n} \| _{B^{\sigma}_{p,r} } + \|   v_n\|  _{B^{\sigma+1}_{p,r} } \|\partial_x v_{0,n} \| _{ L^\infty } \right).
   \notag
\end{align}
Now, using the estimates found in Corollary \ref{cor} we have 
\begin{align}
  \|   ( \partial _ x v_n)^2\partial_x  \left( v_{0,n}   \right)   \| _{B^\sigma_{p,r} }
  & \lesssim  2^{n(\sigma+1-s)} 2^{n(\frac{\delta}{p} - \frac12)} 2^{-n(\frac12+\delta)} 
  \notag
  \\ & 
  +      \|  \partial _x v_n\|  _{B^{\sigma-1}_{p,r} } 
  \left(     \|  \partial _x v_n\|  _{B^{\sigma-1}_{p,r} } 2^{n(\sigma +1 -s)} + 2^{n(\sigma +1 -s)}  2^{-n( \frac12 + \delta) } 
  \right).
  \label{estimate_w_p} 
\end{align}
In order to continue, we must use the estimate for $  \|  \partial _x v_n\|  _{B^{\sigma-1}_{p,r} } $ found in the following lemma. 
\begin{lemma}
For  $ 0 < \delta<\frac18$, $ \max\{ 1+\frac1p, s- \frac98\}  < \sigma < s-1$ and $0< \frac{8\delta}{p} < s-\sigma -1$, 
we have 
 $  \|  \partial _x v_n\|  _{B^{\sigma-1}_{p,r} } \lesssim  \|  \partial _x v_{0,n}\|  _{B^{\sigma-1}_{p,r} }  \approx 2^{n(\frac\delta{p} -\frac12 -\delta )} $
\end{lemma}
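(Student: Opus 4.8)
The plan is to read $\partial_x v_n$ off the Duhamel form of \eqref{forq} and run a Gronwall estimate in $B^{\sigma-1}_{p,r}$. Writing the equation as $\partial_t v_n=-(v_n^2-(\partial_x v_n)^2)\partial_x v_n-N(v_n)$, integrating in $t$ and applying $\partial_x$ (bounded $B^\sigma_{p,r}\to B^{\sigma-1}_{p,r}$) gives
$$\|\partial_x v_n(t)\|_{B^{\sigma-1}_{p,r}}\le\|\partial_x v_{0,n}\|_{B^{\sigma-1}_{p,r}}+\int_0^t\Big(\big\|(v_n^2-(\partial_x v_n)^2)\partial_x v_n\big\|_{B^\sigma_{p,r}}+\|\partial_x N(v_n)\|_{B^{\sigma-1}_{p,r}}\Big)\,d\tau .$$
The only external input needed is the well-posedness bound $\|v_n(t)\|_{B^s_{p,r}}\lesssim\|v_{0,n}\|_{B^s_{p,r}}\approx 1$ on $[0,T]$, which by the Besov embeddings yields $\|v_n(t)\|_{B^{s'}_{p,r}}\lesssim 1$ for every $s'\le s$ (in particular $\|\partial_x v_n\|_{B^\sigma_{p,r}}\le\|v_n\|_{B^{\sigma+1}_{p,r}}\lesssim 1$ since $\sigma+1<s$) and $\|v_n\|_{L^\infty},\|\partial_x v_n\|_{L^\infty},\|\partial_x^2 v_n\|_{L^\infty}\lesssim 1$, together with the data bounds in Corollary \ref{cor}.

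First I would sharpen the $L^\infty$ control to $\|v_n(t)\|_{L^\infty}+\|\partial_x v_n(t)\|_{L^\infty}\lesssim 2^{-n/2}$ on $[0,T]$ (after shrinking $T$, which is harmless for the theorem). Evaluating the transport equations for $v_n$ and for $\partial_x v_n$ along the characteristics of the Lipschitz velocity $v_n^2-(\partial_x v_n)^2$, and estimating the forcing in $L^\infty$ via the fact that $(1-\partial_x^2)^{-1}$ and $(1-\partial_x^2)^{-1}\partial_x$ have $L^1$ convolution kernels and $(1-\partial_x^2)^{-1}\partial_x^2=-\mathrm{Id}+(1-\partial_x^2)^{-1}$, together with $\|\partial_x^2 v_n\|_{L^\infty}\lesssim 1$, one gets a cubic integral inequality which a standard continuity argument closes, the initial data being $\|v_{0,n}\|_{L^\infty}\approx 2^{-n/2}$ and $\|\partial_x v_{0,n}\|_{L^\infty}\lesssim 2^{-(1/2+\delta)n}$ by Corollary \ref{cor}.

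Next I would bound the integrand term by term with the Moser and algebra estimates (4)--(5), the product estimate (6), and the interpolation inequality (7) of Lemma \ref{properties}, writing $Y(\tau):=\|\partial_x v_n(\tau)\|_{B^{\sigma-1}_{p,r}}$ (so $\|\partial_x v_n\|_{L^\infty}\lesssim Y$). Expanding the squares and using the sharp $L^\infty$ bounds, the transport term is $\lesssim 2^{-n}$; for $\partial_x N(v_n)$, since $(1-\partial_x^2)^{-1}\partial_x^2$ is of order $0$ and $(1-\partial_x^2)^{-1}\partial_x$ of order $-1$, it reduces to $\|v_n^3\|_{B^{\sigma-1}_{p,r}}+\|v_n(\partial_x v_n)^2\|_{B^{\sigma-1}_{p,r}}+\|(\partial_x v_n)^3\|_{B^{\sigma-2}_{p,r}}$, the first two being $\lesssim 2^{-n}(1+Y)$ owing to the two small $L^\infty$ factors they carry, while the cubic term $(\partial_x v_n)^3$ is estimated by (6) at the level $\bar\sigma:=\max\{\sigma,3/2\}$ (to accommodate the possibly negative index $\sigma-2$) together with the interpolation $\|\partial_x v_n\|_{B^{\sigma-2}_{p,r}}\lesssim\|\partial_x v_n\|_{B^{\sigma-1}_{p,r}}^{1/2}\|\partial_x v_n\|_{B^{\sigma-3}_{p,r}}^{1/2}\lesssim Y^{1/2}$, and is $\lesssim Y$. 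Hence
$$Y(t)\le Y(0)+C\int_0^t\big(2^{-n}+Y(\tau)\big)\,d\tau\qquad(t\in[0,T]),$$
with $C$ independent of $n$ and $t$; since $Y(\tau)\le\|v_n(\tau)\|_{B^\sigma_{p,r}}\lesssim 1$ a priori, Gronwall gives $Y(t)\lesssim Y(0)+2^{-n}$.

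It then remains to observe that $\|\partial_x v_{0,n}\|_{B^{\sigma-1}_{p,r}}\approx 2^{n(\delta/p-1/2-\delta)}$: the upper bound is Corollary \ref{cor} once one checks $2^{n(\sigma-s)}\lesssim 2^{n(\delta/p-1/2-\delta)}$ — which, using $8\delta/p<s-\sigma-1$, amounts to $\delta(1-9/p)\le 1/2$, true for $\delta<1/8$ and $p>1$ — and the lower bound holds because $2^{-n/2}\partial_x[\phi(2^{-\delta n}\cdot)]$ is spectrally localized in $\Delta_{-1}$ and spectrally disjoint from $\partial_x u_{0,n}$, as in Lemma \ref{LYZ}; combined with $2^{-n}\lesssim 2^{n(\delta/p-1/2-\delta)}$ (i.e. $\delta(1-1/p)\le 1/2$, again clear since $\delta<1/8$), this gives $Y(t)\lesssim 2^{n(\delta/p-1/2-\delta)}\approx\|\partial_x v_{0,n}\|_{B^{\sigma-1}_{p,r}}$, proving the lemma. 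The delicate point throughout is the cubic derivative term $(\partial_x v_n)^3$ coming from $(1-\partial_x^2)^{-1}(\tfrac13 u_x^3)$: unlike for the Camassa--Holm equation it is not tamed by any $H^1$-type conserved quantity, it forces the detour through $\bar\sigma=\max\{\sigma,3/2\}$ and the interpolation step above, and it is precisely what degenerates at the endpoint $p=1$ — the same obstruction that keeps well-posedness in the gap $3/2<s<5/2$ open.
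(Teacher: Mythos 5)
Your argument is sound, but it is structured quite differently from the paper's. The paper treats $\omega=\partial_x v_n$ as the solution of the transport equation
\begin{equation*}
\partial_t\omega+(v_n^2-(\partial_x v_n)^2)\partial_x\omega=-\partial_x(v_n^2-(\partial_x v_n)^2)\,\omega-\partial_x N(v_n),
\end{equation*}
and invokes Proposition \ref{transport_prop}, so the convection term never appears as a source: it is absorbed into the exponential factor $e^{V(t)}$ with $V(t)=\int_0^t\|\partial_x(v_n^2-(\partial_x v_n)^2)\|_{B^{1/p}_{p,\infty}\cap L^\infty}\,ds$ bounded uniformly in $n$, and the remaining forcing is handled exactly as you handle yours, leading to the same Gr\"onwall inequality $Y(t)\le Y(0)+C\int_0^t(2^{-an}+Y)\,d\tau$. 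You instead work with the plain Duhamel form, which puts $(v_n^2-(\partial_x v_n)^2)\partial_x v_n$ on the right-hand side at the level $B^\sigma_{p,r}$; to keep that term from being merely $O(1)$ (which would ruin the estimate) you are forced to prove the auxiliary decay $\|v_n(t)\|_{L^\infty}+\|\partial_x v_n(t)\|_{L^\infty}\lesssim 2^{-n/2}$ by a characteristics argument. That extra lemma is correct as sketched (the forcing is cubic in $\|v_n\|_{L^\infty}+\|\partial_x v_n\|_{L^\infty}$ up to a factor of $\|\partial_x^2v_n\|_{L^\infty}\lesssim 1$, so a bootstrap from $m(0)\approx 2^{-n/2}$ closes on any fixed $[0,T]$ for large $n$), and it is the price you pay for avoiding the transport estimate; the paper's route needs no smallness of $v_n$ in $L^\infty$ at positive times and only the $O(1)$ bound on $V$. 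Your verification of $\|\partial_x v_{0,n}\|_{B^{\sigma-1}_{p,r}}\approx 2^{n(\delta/p-1/2-\delta)}$ via spectral disjointness of the two pieces of $v_{0,n}$, and the comparison $2^{-n}\lesssim 2^{n(\delta/p-1/2-\delta)}$, are correct and in fact supply details the paper leaves implicit. Two minor points: your interpolation bound $\|\partial_x v_n\|_{B^{\sigma-2}_{p,r}}\lesssim Y^{1/2}$ is both unnecessary and weaker than the immediate embedding $\|\partial_x v_n\|_{B^{\sigma-2}_{p,r}}\le Y$, which already gives the cubic term $\lesssim Y$; and you should note that the trilinear Moser bounds at regularity $\sigma-1$ require $\sigma-1>1/p$, which your hypothesis $\sigma>1+\frac1p$ guarantees.
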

\begin{proof}
Notice $ \omega = \partial_x v_n$ is a solution to the following transport equation
$$
\partial_t \omega +(v_n^2 - (\partial_x v_n)^2) \partial_x \omega = -\partial_ x (v_n^2 - (\partial_x v_n)^2) \omega - \partial_x N(v_n) . 
$$
We apply Proposition \ref{transport_prop}  to find 
$$
\| \omega\| _{B^{\sigma -1 } _{p,r} } \le e^{ V(t) } \left( \| \omega (0) \| _{B^{\sigma -1 } _{p,r} } + \int _0^t e^{-V(s)} \|
\partial_ x (v_n^2 - (\partial_x v_n)^2) \omega +  \partial_x N(v_n)\| _{B^{ \sigma -1} _{p,r} }   ds\right),
$$ 
where $V(t) = \int_0^t \|  \partial _x  (v_n^2 - (\partial_x v_n)^2)  \| _{B^{ 1/p}_{p,\infty} \cap L^\infty} ds$ . Clearly $V(t)$ is bounded independent of $n$. We use the algebra property to find 
$$
\|
\partial_ x (v_n^2 - (\partial_x v_n)^2) \omega  \| _{B^{ \sigma -1} _{p,r} } 
\lesssim 
\|
  (v_n^2 - (\partial_x v_n)^2) \| _{B^{ \sigma } _{p,r} } \|  \omega  \| _{B^{ \sigma -1} _{p,r} }  \lesssim  \|  \omega  \| _{B^{ \sigma -1} _{p,r} } .
$$
We estimate $\| \partial_x N(v_n)\| _{B^{ \sigma -1} _{p,r} }  $ by 
\begin{align}
\| \partial_x N(v_n)\| _{B^{ \sigma -1} _{p,r} } 
& \lesssim
 \|  v_n^3 \| _{B^{ \sigma -1} _{p,r} } +  \|  v_n(  \partial_ x v_n )^2\| _{B^{ \sigma -1} _{p,r} }  +  \|  ( \partial_ xv_n) ^3 \| _{B^{ \sigma -2} _{p,r} }  
\\  &
 \lesssim
 2^{-\frac98 n}  + \| \omega \| _{B^{ \sigma -1} _{p,r} }  .
\end{align}
Making these substitutions into the above, we now have 
$$
\| \omega\| _{B^{\sigma -1 } _{p,r} } \le  \| \omega (0) \| _{B^{\sigma -1 } _{p,r} } + \int _0^t    2^{-\frac98 n } +   \| \omega\| _{B^{\sigma -1 } _{p,r} }ds ,
$$ 
and now applying Gr\"onwall's inequality yields the result. 
\end{proof}
Applying this lemma, our inequality \ref{estimate_w_p} becomes 
\begin{align}
  \|   ( \partial _ x v_n)^2\partial_x  \left( v_{0,n}   \right)   \| _{B^\sigma_{p,r} }
  & \lesssim  2^{n(\sigma+1-s)} 2^{n(\frac{\delta}{p} - \frac12)} 2^{-n(\frac12+\delta)} 
  \notag
  \\ & 
  +      2^{n( \frac\delta{p}-\frac12 - \delta)}
  \left(    2^{n( \frac\delta{p}-\frac12 - \delta)}
 2^{n(\sigma +1 -s)} + 2^{n(\sigma +1 -s)}  2^{-n( \frac12 + \delta) } 
  \right) 
  \\
  & \approx  2^{n( \sigma+\frac{ \delta}{p} - \delta-s)} + 
  2^{n( \sigma+\frac{ 2\delta}{p} -2 \delta-s)}  \approx  2^{n( \sigma+\frac{ \delta}{p} - \delta-s)}.
  \label{terrible} 
\end{align}
Next we estimate 
 $  \|   ( \partial _ x v_n)^2\partial_x  \left(   t  v_{0,n} ^2 \partial _x v_{0,n} \right)   \| _{B^\sigma_{p,r} }$ and find 
 \begin{align} 
   \|   ( \partial _ x v_n)^2\partial_x  \left(   t  v_{0,n} ^2 \partial _x v_{0,n} \right)   \| _{B^\sigma_{p,r} } 
   & \lesssim 
     \|   ( \partial _ x v_n)^2 \| _{B^{\sigma-1} _{p,r} } \| \partial_x  \left(   t  v_{0,n} ^2 \partial _x v_{0,n} \right)   \| _{B^\sigma_{p,r} }
  \notag  \\
    & + 
      \|   ( \partial _ x v_n)^2 \| _{B^\sigma_{p,r} }\| \partial_x  \left(   t  v_{0,n} ^2 \partial _x v_{0,n} \right)   \| _{   L^\infty} .
      \notag
\end{align}
We use the previous lemma   to get 
 \begin{align} 
   \|   ( \partial _ x v_n)^2\partial_x  \left(   t  v_{0,n} ^2 \partial _x v_{0,n} \right)   \| _{B^\sigma_{p,r} } 
   & \lesssim 
2^{n(\frac{2\delta}{p} - 1  -2\delta )}  \| \partial_x  \left(   t  v_{0,n} ^2 \partial _x v_{0,n} \right)   \| _{B^\sigma_{p,r} }
   \notag \\&     + 
      \|   ( \partial _ x v_n)^2 \| _{B^\sigma_{p,r} } \| \partial_x  \left(   t  v_{0,n} ^2 \partial _x v_{0,n} \right)   \| _{   L^\infty}.
\end{align}
  We use 
    $\|  \partial_x  \left(   t  v_{0,n} ^2 \partial _x v_{0,n}   \right)  \| _{   L^\infty} 
 \lesssim  t \|     v_{0,n} (  \partial _x v_{0,n})^2     \| _{   L^\infty}  +t \|     v_{0,n} ^2 (  \partial _x ^2v_{0,n})     \| _{   L^\infty}$
 and we use  \\
   $\|  \partial_x  \left(   t  v_{0,n} ^2 \partial _x v_{0,n}   \right)  \| _{B^\sigma_{p,r} }
 \lesssim  t \|     v_{0,n} (  \partial _x v_{0,n})^2     \| _{B^\sigma_{p,r} }
+t \|     v_{0,n} ^2      \| _{   L^\infty} \|     (  \partial _x ^2v_{0,n})     \| _{  B^\sigma_{p,r}}+
 \|     v_{0,n} ^2      \| _{  B^\sigma_{p,r} } \|     (  \partial _x ^2v_{0,n})     \| _{  L^\infty} $
 and apply the estimates found in Corollary \ref{cor} to conclude 
  \begin{align}  \notag
   \|   ( \partial _ x v_n)^2\partial_x  \left(   t  v_{0,n} ^2 \partial _x v_{0,n} \right)   \| _{B^\sigma_{p,r} } 
   & \lesssim 
 t 2^{n(\frac{2\delta}{p} - 1  -2\delta )}\left( 2^{n( 2\sigma +\frac32 +\frac{\delta}{p} -2s ) } + 
 2^{n(\sigma +1 -s)} 
 \right. \\ & +\left.
 2^{n(\frac{2\delta}{p} -1 ) } ( 2^{-( \frac12 +2 \delta ) n } +  2^{n( 2 -  s-\frac{\delta}{p} )}) 
 \right) \notag
  \\&    + 
    t 2^{n(2\sigma +2 -2s)}  \left( 2^{n( -\frac32 -2\delta) } + 2^{n(1-s-\frac{\delta}{p} )}
    \right) 
      \notag
      \\& \lesssim   t 2^{n( \sigma+\frac{ \delta}{p} - \delta-s)}.
      \label{terrible2} 
\end{align}
Combining estimates \ref{terrible}  and \ref{terrible2}, we obtain 
$$
\| (\partial_x v_n)^2 \partial_x w_n \| _{B^\sigma _{p,r} } \lesssim   2^{n( \sigma+\frac{ \delta}{p} - \delta-s)} .
$$ 
  
  In order to estimate the next term in inequality \ref{3.18} we shall need to estimate $\|\partial_x w_n \|_{B^\sigma _{p,r} } $. We have 
\begin{align} 
      \|    \partial _x  w_n   \|_{B^{\sigma} _{p,r} } &  \lesssim    
   \|   \partial_ x   v_{0,n}    \|_{B^{\sigma} _{p,r} }  +t   \|     v_{0,n}    \|_{B^{\sigma} _{p,r} }    \|   \partial_x    v_{0,n}    \|_{B^{\sigma} _{p,r} } ^2+ t   \|     v_{0,n}    \|_{B^{\sigma} _{p,r} } ^2   \|   \partial_x    v_{0,n}    \|_{B^{\sigma+1} _{p,r} }  
  \\
  & 
  \lesssim
    2^{-( \frac12  -  \frac{ \delta} {p}  + \delta ) n } + 2^{n( \sigma   +1 - s)}   + t 2^{n(\frac{\delta}{p} - \frac12) } \left(      2^{-( \frac12  -  \frac{ \delta} {p}  + \delta ) n } + 2^{n( \sigma   +1 - s)}    \right) 
    \\ & + t 2^{ 2 ( \frac{\delta}{p} - \frac12 ) n} \left (     2^{-( \frac12  -  \frac{ \delta} {p}  + \delta ) n } + 2^{n( \sigma   +2 - s)}     \right)  ,
\end{align}
and using the    condition $0< \frac{2\delta}{p} < s-\sigma -1$ we see that all of the exponents are negative, and therefore $     \|    \partial _x  w_n   \|_{B^{\sigma} _{p,r} }  \lesssim 1$. 
Using this estimate, we now find 
\begin{align}
   \|  \widetilde w_n  ( v_n + v_{0,n} )\partial _x w_ n  
 \| _{B^\sigma_{p,r} } 
 &   \lesssim 
   \|  \widetilde w_n  \| _{B^\sigma_{p,r} }  \|   v_n + v_{0,n}  \| _{B^\sigma_{p,r} } \| \partial _x w_ n   \| _{B^\sigma_{p,r} }  
   \\
   &   \lesssim \left( 2^{n({s'}  -   s ) }  + 2^{ (\frac{\delta}{p} -\frac12 )n} \right)  \| \partial _x w_ n   \| _{B^\sigma_{p,r} }     \|  \widetilde w_n  \| _{B^\sigma_{p,r} }\lesssim   \|  \widetilde w_n  \| _{B^\sigma_{p,r} } .
  \end{align} 
  We now estimate $ \|    v_{0,n}^2  \partial_x v_{0,n}  ( v_n + v_{0,n} ) \partial _x w_ n    
 \| _{B^\sigma_{p,r} } $. Using the algebra property,  and Corollary \ref{cor}  we find 
 \begin{align}
  \|    v_{0,n}^2  \partial_x v_{0,n}  ( v_n + v_{0,n} ) \partial _x w_ n    
 \| _{B^\sigma_{p,r} } 
 & \le 
   \|    v_{0,n} \| _{B^\sigma_{p,r} } ^2\|   \partial_x v_{0,n}   \| _{B^\sigma_{p,r} } \| ( v_n + v_{0,n} ) \| _{B^\sigma_{p,r} } \| \partial _x w_ n    
 \| _{B^\sigma_{p,r} }
 \\
 & \lesssim 
 \left( 2^{n(\sigma -   s ) }  + 2^{ (\frac{\delta}{p} -\frac12 )n} \right) ^3 \left ( 2^{n(\sigma +1  -   s ) }  + 2^{ (\frac{\delta}{p} -\frac12 -\delta  )n} \right) 
 \\
 & 
 \lesssim  2^{n(3 \sigma - 3s )} + 2^{-\frac98 n} .
  \end{align} 
We estimate $\|   v_{0,n}^2 \partial_x ( v_{0,n}^2\partial_x v_{0,n})
 \| _{B^\sigma_{p,r} } 
$
using  properties (4) and (5) of Lemma \ref{properties}  and find
 \begin{align}
 \|   v_{0,n}^2 \partial_x ( v_{0,n}^2\partial_x v_{0,n})
 \| _{B^\sigma_{p,r} }   & \lesssim   
  \|   v_{0,n}  \| _{B^\sigma_{p,r} }   ^3\|  \partial_x v_{0,n}
 \| _{B^\sigma_{p,r} }   ^2 +   \|   v_{0,n} ^4 \| _{L^\infty} \|  \partial_x^2  v_{0,n}
 \| _{B^\sigma_{p,r} }   
 +   \|   v_{0,n} ^4 \| _{B^\sigma_{p,r} }   \|  \partial_x^2  v_{0,n}
 \| _{L^\infty}    ,  \notag
  \end{align} 
  and using the estimates found in Corollary \ref{cor} we have
  \begin{align}\notag
 \|   v_{0,n}^2 \partial_x ( v_{0,n}^2\partial_x v_{0,n})
 \| _{B^\sigma_{p,r} }   
 & \lesssim   
 \left(  2^{n(\sigma-   s ) }  + 2^{n (\frac{\delta}{p} -\frac12 )} \right)    ^3 
 \left(  2^{n(    \frac{ \delta} {p}  -\frac12 - \delta )  } + 2^{n({\sigma}  +1 - s)}
 \right)   ^2
 \\&
  +  2^{-2n}  \left( 2^{-(\frac12  -  \frac{ \delta} {p}  + 2 \delta ) n } +  2^{n( {\sigma}  +2 -s )} 
 \right)  + 2^{4 n(\frac{\delta}{p} - \frac12 ) } 
 \left(2^{-( \frac12 +2 \delta ) n } +  2^{n( 2 -  s-\frac{\delta}{p} )}   \right)  \notag 
 \\
 & \lesssim 2^{n(\sigma - s) }  + 2^{ -\frac98 n}  \lesssim  2^{n(\sigma - s) }   .
  \end{align} 
Combining the previous estimates, and substituting into inequality \eqref{3.18}  we find
\begin{align}  
\|  \widetilde w \|_{B^\sigma_{p,r} } &\lesssim \int_0^t  \left(   2^{n( \sigma+\frac{ \delta}{p} - \delta-s)}+  \|  \widetilde w_n  (\tau)  \| _{B^\sigma_{p,r} }  +
 \tau 2^{n(\sigma - s)}  \right)  d\tau . 
\end{align} 
By Gr\"onwall's inequality, we have 
\begin{align}  
\|  \widetilde w \|_{B^\sigma_{p,r} } &\lesssim    2^{n( \sigma+\frac{ \delta}{p} - \delta-s)} + 2^{n( \sigma - s)} t ^2  .
 \label{3.22} 
\end{align}
A simple calculation also shows that
$$
\|   \widetilde  w _n \|_{B^{s+2} _ {p,r} } \lesssim 2^{2n}  . 
$$
We now apply the interpolation lemma using the last inequality and inequality \eqref{3.22} to find 
\begin{align}
\|   \widetilde  w _n \|_{B^{s} _ {p,r} } &  \lesssim \|   \widetilde  w _n \|_{B^{\sigma} _  {p,r} } ^\theta \|   \widetilde  w _n \|_{B^{s+2} _ {p,r} } ^{1-\theta} 
\\
& \lesssim   \left(     2^{n( \sigma+\frac{ \delta}{p} - \delta-s)} + 2^{n( \sigma - s)} t ^2    \right)^\theta 2^{2n (1-\theta) }  , 
\end{align} 
where $\theta =2 (2+s-\sigma )^{-1}  > 1/2 $.  Therefore, we find
\begin{align}
\|   \widetilde  w _n \|_{B^{s} _ {p,r} }  
& \lesssim
   \left[ \left(     2^{n(2 \sigma+\frac{ 2\delta}{p} -2 \delta-2s)} + 2^{2\sigma + \frac{\delta}{p} - \delta -2s } t^2 + 2^{n( 2\sigma - 2s)} t ^4   \right)   2^{n(2s-2\sigma)    } \right] ^{ (2+s-\sigma )^{-1} }  
         \\
   & =  
       \left[ \left(     2^{n( \frac{ 2\delta}{p} -2 \delta)} + 2^{n(   \frac{\delta}{p} - \delta ) } t^2 +  t ^4   \right)       \right] ^{ (2+s-\sigma )^{-1} }  .
\end{align} 
Now taking the liminf of both sides, we see 
\begin{align}
 \liminf_{n\rightarrow \infty} \|   \widetilde  w _n \|_{B^{s} _ {p,r} }  
& \lesssim
 t^{ 4 (2+s-\sigma )^{-1} }   = t^{2 \theta} .
\end{align}

{\bf Proof of (3).} 
We now show that  for small $t\ge 0$,
\begin{align}
\liminf_{n\rightarrow \infty} 
 \| w_n - u_{0,n} \|_{B^s_{p,r}}  \ge c t. 
\end{align}  
We have 
$$
 \| w_n - u_{0,n} \|_{B^s_{p,r}}  =  \|  2^{-\frac12 n} \phi  (2^{-\delta n } \cdot) -  t  v_{0,n} ^2 \partial _x  v_{0,n} \|_{B^s_{p,r}}  , 
$$
and by the triangle inequality
\begin{align}\notag
 \| w_n - u_{0,n} \|_{B^s_{p,r}} & \ge   t   \|   v_{0,n} ^2 \partial _x  v_{0,n} \|_{B^s_{p,r}}     -\|  2^{-\frac12 n} \phi   (2^{-\delta n } \cdot)  \|_{B^s_{p,r}} .
\end{align} 

Now we break  $\|   v_{0,n} ^2   \partial _x  v_{0,n} \|_{B^\sigma_{p,r}} $ into pieces. Let $v_{0,n} = u_{0,n} + u_\ell$, where $u_\ell$ is the low frequency part of the initial data. We have 
\begin{align} 
\|   v_{0,n} ^2   \partial _x  v_{0,n} \|_{B^s_{p,r}}  \ge 
\|     u_\ell ^2   \partial _x u_{0,n} \|_{B^s_{p,r}} -\|     u_\ell ^2   \partial _x  u_\ell\|_{B^s_{p,r}} - 2 \|     u_{0,n}   u_\ell     \partial _x  v_{0,n} \|_{B^s_{p,r}} -
 \|     u_{0,n}  ^2   \partial _x  v_{0,n} \|_{B^s_ {p,r}} .  
 \notag
\end{align} 
The last three terms above tend to zero as $n \rightarrow \infty$. Indeed, using property (4) of Lemma \ref{properties} we have 
\begin{align}
\|     u_\ell ^2   \partial _x  u_\ell\|_{B^s_{p,r}}  \lesssim \|     u_\ell\|_{B^s_{p,r}}  ^2\|    \partial _x  u_\ell\|_{B^s_{p,r}}  \lesssim 2^{ \frac{2\delta}{p} n- n } 2^{n(\frac\delta p - \frac12 - \delta )} ,
\end{align}
which tends to zero as $n \rightarrow \infty$.  
Similarly, using property (5) of Lemma \ref{properties} we can bound the third term on the right hand side by
\begin{align}  
  \|     u_{0,n}   u_\ell     \partial _x  v_{0,n} \|_{B^s_{p,r}} & \lesssim 
    \|     u_{0,n}   u_\ell   \| _{L^\infty} \|    \partial _x  v_{0,n} \|_{B^s_{p,r}} 
    +
    \|     u_{0,n}   u_\ell   \| _{B^s_{p,r}}  \|    \partial _x  v_{0,n} \|_{ L^\infty } 
    \\
    & \lesssim 
    2^{-(s+\frac12 )n}  2^n + 2^{ \frac{\delta}{p} n-\frac12 n } \left( 2^{-( \frac12 +\delta ) n} + 2^{-(s-1) n } \right),
\end{align} 
which again tends to zero as $n \rightarrow \infty$.  
Finally, 
\begin{align}  
  \|     u_{0,n}  ^2   \partial _x  v_{0,n} \|_{B^s_ {p,r}}  &  \lesssim  \|     u_{0,n}  \|_{ L^\infty}  ^2 \|    \partial _x  v_{0,n} \|_{B^s_ {p,r}} 
  +  \|     u_{0,n}  \|_{B^s_ {p,r}}   ^2 \|    \partial _x  v_{0,n} \|_{ L^\infty} 
\\&   \lesssim  2^{-2s n -\frac{2\delta}{p} n}  2^{n} +  2^{-(\frac12 +\delta)n} ,
\end{align} 
and again, this tends towards zero as $n$ grows towards infinity. 
The first term,  $\|     u_\ell ^2   \partial _x  u_{0,n} \|_{B^s_{p,r}}$, is bounded below by a constant by the last estimate in Lemma \ref{LYZ}. 
  Combining the above estimates, the proof of (3)  is complete. 

{\bf Proof of (4).} 
By the triangle inequality, this yields that  for any positive $t$
\begin{align}
 \| u_n - v_{n} \|_{B^s_{p,r}}   \ge \| w_n - u_{0,n} \|_{B^s_{p,r}}  -   \| u_n - u_{0,n} \|_{B^s_{p,r}} - \| v_n -  w_n \|_{B^s_{p,r}} . 
 \end{align} 
Combining the previous results, we can conclude that there exists a constant $c$ such that  for any $t \in [0, T]$, $ \liminf_{n\rightarrow \infty}  \| u_n - v_{n} \|_{B^s_{p,r}}  \ge c_1 t - c_2 t^{2\theta} $, which proves the theorem since $2\theta >1$.

 \bibliographystyle{plain}
 
\bibliography{mybib}

\end{document}